\newcommand{\comment}[1]{}
\newcounter{rea}
\newcounter{rek}
\newcounter{res}
\begin{document}

\title[New weighted multilinear operators and commutators]{New weighted multilinear operators and commutators of Hardy-Ces\`{a}ro type}         
\author{Ha Duy HUNG}    
\address{High School for Gifted Students,
Hanoi National University of Education, 136 Xuan Thuy, Hanoi, Vietnam} 
\email{{\tt hunghaduy@gmail.com}}
\author{Luong Dang KY}
\address{Department of Mathematics, Quy Nhon University, 
170 An Duong Vuong, Quy Nhon, Binh Dinh, Viet Nam} 
\email{{\tt dangky@math.cnrs.fr}}

\keywords{Hardy-Cesar\`{o} operators, Hardy's inequality, weighted Hardy-Littlewood averages}
\subjclass[2010]{42B35 (46E30, 42B15, 42B30)}

\begin{abstract}
A general class of weighted multilinear Hardy-Ces\`{a}ro operators that acts on the product of Lebesgue spaces and central Morrey spaces. Their sharp bounds are also obtained. In addition, we obtain sufficient and necessary conditions on weight functions so that the commutators of these weighted multilinear Hardy-Ces\`{a}ro operators (with symbols in central BMO space) are bounded on the product of central Morrey spaces. These results extends known results on multilinear Hardy operators.
\end{abstract}

\maketitle
\newtheorem{theorem}{Theorem}[section]
\newtheorem{lemma}{Lemma}[section]
\newtheorem{proposition}{Proposition}[section]
\newtheorem{remark}{Remark}[section]
\newtheorem{corollary}{Corollary}[section]
\newtheorem{definition}{Definition}[section]
\newtheorem{example}{Example}[section]
\numberwithin{equation}{section}
\newtheorem{Theorem}{Theorem}[section]
\newtheorem{Lemma}{Lemma}[section]
\newtheorem{Proposition}{Proposition}[section]
\newtheorem{Remark}{Remark}[section]
\newtheorem{Corollary}{Corollary}[section]
\newtheorem{Definition}{Definition}[section]
\newtheorem{Example}{Example}[section]

\section{Introduction} 
The Hardy integral inequality and its variants play an important role in various branches of analysis such as approximation theory, differential equations, theory of function spaces etc (see \cite{bradley, edmunds, grafakos,hardy,muckenhoupt3, sawyer} and references therein). Therefore, there are various papers studying Hardy integral inequalities for operator $S$ and its generalizations. The classical Hardy operator, its variants and extensions were appeared in various papers (we refer to \cite{bradley,carton,fu1,fu2,fu3,hardy, hung,muckenhoupt3, sawyer,xiao} for surveys and historical details about these differerent aspects of the subject). On the other hand, the study of multilinear operators is not motivated by a mere quest to generalize the theory of linear operators but rather by their natural appearance in analysis. Coifman and Meyer in their pioneer work in the 1970s were one of the first to adopt a multilinear point of view in their study of certain singular integral operators, such as the Calder\'{o}n commutators, paraproducts, and pseudodifferential operators. 
\vskip12pt
 Let $\psi:[0,1]\to[0,\infty)$ be a measurable function. The weighted Hardy operator $U_\psi$ is defined on all complex-valued measurable functions $f$ on $\mathbb R^d$ as 
\begin{equation}\label{sec1eq1}
U_\psi f(x)=\int_0^1f(tx)\psi(t)dt.
\end{equation}
When $\psi=1$, this operator is reduced to the usual Hardy operator $S$ defined by $Sf(x)=\frac1x\int_0^x f(t)dt$. Results on the boundedness of $U_\psi$  on $L^p\left(\mathbb R^d\right)$ were first proved by Carton-Lebrun and Fosset \cite{carton}. Under certain conditions on $\psi$, the authors \cite{carton} found that $U_\psi$ is bounded from $BMO(\mathbb R^d)$ into itself. Furthermore,  $U_\psi$ commutes with the Hilbert transform in the case $n=1$ and with a certain Calder\'{o}n-Zygmund singular integral operator (and thus with the Riesz transform) in the case $n\geq2$. A deeper extension of the results obtained in  \cite{carton} was due to Jie Xiao \cite{xiao}.  
\begin{theorem}\label{sec1theo1}(\cite{xiao})
Let $1<p<\infty$ and $\psi:[0,1]\to[0,\infty)$ be a measurable function. Then, $U_\psi$ is bounded on $L^p(\mathbb R^d)$ if and only if 
\begin{equation}\label{sec1eq2}
\int_{0}^1t^{-n/p}\psi(t)dt<\infty.
\end{equation}
Furthermore, 
\begin{equation}\label{sec1eq3}
\|U_\psi\|_{L^p(\mathbb R^d)\to L^p(\mathbb R^d)}=\int_{0}^1t^{-n/p}\psi(t)dt<\infty.
\end{equation}
\end{theorem}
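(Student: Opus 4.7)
The plan is to establish the two-sided identity $\|U_\psi\|_{L^p(\R^n)\to L^p(\R^n)} = \int_0^1 t^{-n/p}\psi(t)\,dt$ directly, interpreting both sides as elements of $[0,\infty]$; this yields the equivalence claimed in the theorem and the sharp constant in one stroke.

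For the upper bound, I would apply Minkowski's integral inequality to \eqref{sec1eq1} together with the dilation scaling $\|f(t\cdot)\|_{L^p}=t^{-n/p}\|f\|_{L^p}$ (obtained from the change of variables $y=tx$). This yields
$$\|U_\psi f\|_{L^p}\le \|f\|_{L^p}\int_0^1 t^{-n/p}\psi(t)\,dt,$$
giving the sufficiency of \eqref{sec1eq2} and the $\le$ half of \eqref{sec1eq3}.

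For the matching lower bound, which simultaneously delivers necessity of \eqref{sec1eq2} and sharpness of \eqref{sec1eq3}, I would test $U_\psi$ against the near-extremizer family $f_\epsilon(x) = |x|^{-n/p-\epsilon}\chi_{\{|x|>1\}}(x)$ for small $\epsilon>0$. A polar-coordinate calculation gives $\|f_\epsilon\|_{L^p}^p = \omega_{n-1}/(\epsilon p)$, where $\omega_{n-1}$ denotes the surface measure of the unit sphere. Since $f_\epsilon$ vanishes on the unit ball, so does $U_\psi f_\epsilon$, while for $|x|>1$ one computes
$$U_\psi f_\epsilon(x) = |x|^{-n/p-\epsilon}\int_{1/|x|}^{1} t^{-n/p-\epsilon}\psi(t)\,dt.$$
Restricting the $x$-integral to $\{|x|>R\}$ and discarding the inner integrand on $(1/|x|,1/R)$ then yields the quantitative bound
$$\frac{\|U_\psi f_\epsilon\|_{L^p}}{\|f_\epsilon\|_{L^p}} \ge R^{-\epsilon}\int_{1/R}^{1} t^{-n/p-\epsilon}\psi(t)\,dt.$$
Sending $\epsilon\to 0^+$ first (so that $R^{-\epsilon}\to 1$ and the inner integral converges to $\int_{1/R}^1 t^{-n/p}\psi(t)\,dt$ by dominated convergence on the compact set $[1/R,1]$), then $R\to\infty$, forces $\|U_\psi\|_{L^p\to L^p} \ge \int_0^1 t^{-n/p}\psi(t)\,dt$ regardless of whether that integral is finite or infinite.

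The only subtle step is respecting the order of limits $\epsilon\to 0^+$ then $R\to\infty$; beyond that I anticipate no real obstacle, since the choice of test functions is essentially forced by the observation that $U_\psi$ is diagonalized, up to a scalar factor $\int_0^1 t^{-s}\psi(t)\,dt$, by the homogeneous functions $x\mapsto |x|^{-s}$, so extremizing along this family is the natural strategy.
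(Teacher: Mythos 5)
Your proposal is correct and is essentially the argument the paper itself uses (for the more general Theorem \ref{sec3theo1}, of which this statement is the case $m=n=1$, $s(t)=t$, $\omega\equiv1$): Minkowski's integral inequality plus the dilation scaling $\|f(t\cdot)\|_{L^p}=t^{-n/p}\|f\|_{L^p}$ for the upper bound, and the truncated power functions $|x|^{-n/p-\epsilon}\chi_{\{|x|>1\}}$ with $\epsilon\to0^+$ followed by $R\to\infty$ for the matching lower bound. One cosmetic remark: the appeal to dominated convergence for the inner integral is unnecessary (and would require knowing integrability of $t^{-n/p}\psi$ on $[1/R,1]$ a priori), since $t^{-\epsilon}\ge1$ on $(0,1]$ gives $\int_{1/R}^{1}t^{-n/p-\epsilon}\psi(t)\,dt\ge\int_{1/R}^{1}t^{-n/p}\psi(t)\,dt$ directly, after which only $R^{-\epsilon}\to1$ and monotone convergence in $R$ are needed.
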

Theorem \ref{sec1theo1} implies immediately the following celebrated integral inequality, due to Hardy \cite{hardy}
\begin{equation}\label{sec1eq4}
\|Sf\|_{L^p(\mathbb R)}\leq \frac p{p-1}\|f\|_{L^p(\mathbb R)}.
\end{equation}
For further applications of Theorem \ref{sec1theo1}, for examples the sharp bounds of classical Riemann-Liouville integral operator, see \cite{fu1,fu2}.
Recently, Chuong and Hung \cite{chuong1} introduced a more general form of $U_\psi$ operator  as follows.
 \begin{equation}\label{sec1eq5}
U_{\psi,s}f(x)=\int_0^1f\left(s(t) x\right)\psi(t)dt.
\end{equation}
Here $\psi:[0,1]\to[0,\infty)$, $s:[0,1]\to\mathbb R$ are measurable functions and $f$ is a measurable complex valued function on $\mathbb R^d$. The authors in \cite{chuong1} obtained sharp bounds of $U_{\psi,s}$ on weighted Lebesgue and BMO spaces, where weights are of homogeneous type. A characterization on  weight functions so that the commutator of $U_{\psi,s}$ is bounded on Lebesgue spaces with symbols in $BMO$. We notice that also in \cite{chuong1}, the authors proved the boundedness on Lebesgue spaces of the following operator (see Theorem 3.2 \cite{chuong1})
\[
\mathcal H_{\psi,s}f(x)=\int_0^\infty f\left(s(t)x\right)\psi(t)dt.
\]
When $d=1$ and $s(t)=\frac1t$, then $\mathcal H_{\psi,s}$ reduces to the classical weighted Hausdorff operator (see \cite{liflyand,lerner} and references therein).
\vskip12pt
Very recently, the multilinear version of $U_\psi$ operators was introduced by Z.W. Fu, S.L Gong, S.Z. Lu and W. Yuan \cite{fu2}. They defined the weighted multilinear Hardy operator as 
\begin{equation}\label{sec1eq6}
U^{m}_{\psi}\left(f_1,\ldots,f_m\right)=\int_{0<t_1,\ldots,t_m<1}\left(\prod_{i=1}^mf_i\left(t_ix\right)\right)\psi(t_1,\ldots,t_m)dt_1\ldots dt_m.
\end{equation}
As showed in \cite{fu2}, when $d=1$ and $\psi(t_1,\ldots,t_m)=\frac1{\Gamma(\alpha)\left|(1-t_1,\ldots,1-t_m)\right|^{m-\alpha}}$, where $\alpha\in(0;m)$, then $U^m_\psi(f_1,\ldots,f_m)(x)=|x|^\alpha I_\alpha^m(f_1,\ldots,f_m)(x)$. The operator $I^m_\alpha$ turns out to be the one-sided analogous to the one-dimensional multilinear Riesz operator ${\mathcal J}^m_\alpha$ studied by Kenig and Stein \cite{kenig}, where
\[
{\mathcal J}^m_\alpha(f_1,\ldots,f_m)(x)=\int_{t_1,\ldots,t_m\in \mathbb R}\frac{\prod_{k=1}^mf_k(t_k)}{\left|(x-t_1,\ldots,x-t_m)\right|^{m-\alpha}}d_1\ldots dt_m.
\]
In \cite{fu2}, the authors obtain the sharp bounds of $U^m_{\psi}$ on the product of Lebesgue spaces and central Morrey spaces. They also proved sufficient and necessary conditions of the weight functions so that the commutators of $U^m_\psi$, with symbols in central BMO spaces, are bounded on the product of central Morrey spaces. In \cite{gong}, S. Gong, Z. Fu, and B. Ma studied the bounds of $U^m_\psi$ on the product Herz spaces and the product Morrey-
Herz spaces. Notice that there is another multilinear version of Hardy operators which was also studied in  \cite{fu3,lu2}.
\vskip12pt
Motivated from \cite{chuong1,fu2,fu1,gong,hung}, this paper aims to study the boundedness of a more general multilinear operator of Hardy type as follows.
\begin{definition}Let $m,n\in\mathbb N$, $\psi:[0,1]^n\to[0,\infty)$, $s_1,\ldots,s_m:[0,1]^n\to\mathbb R$. Given $f_1,\ldots,f_m:\mathbb R^d\to\mathbb C$ be measurable functions, we define the weighted multilinear Hardy-Ces\`{a}ro operator $U^{m,n}_{\psi,\overrightarrow{s}}$ by
\begin{equation}\label{sec1eq7}
U^{m,n}_{\psi,\overrightarrow{s}}\left(f_1,\ldots,f_m\right)(x):=\int_{[0,1]^n}\left(\prod\limits_{k=1}^mf_k\left(s_k(t)x\right)\right)\psi(t)dt,
\end{equation}
where $\overrightarrow{s}=(s_1,\ldots,s_m)$.
\vskip12pt
A mutilinear version of $\mathcal H_{\psi,s}$ can be defined as 
\begin{equation}\label{sec1eq7a}
\mathcal H^{m,n}_{\psi,\overrightarrow{s}}\left(f_1,\ldots,f_m\right)(x):=\int_{\mathbb R_+^n}\left(\prod\limits_{k=1}^mf_k\left(s_k(t)x\right)\right)\psi(t)dt,
\end{equation}
where $\mathbb R_+$ is the set of all positive real numbers.
\end{definition}

It is obviously that, when $n=m$ and $s_k(t)=t_k$, $U^{m,n}_{\psi,\overrightarrow{s}}$ is reduced to $U^m_{\psi}$ as defined in (\ref{sec1eq5}). The main aim of the paper is to establish the sharp bounds of $U^{m,n}_{\psi,\overrightarrow{s}}$ and $\mathcal H^{m,n}_{\psi,\overrightarrow{s}}$ on the product of weighted Lebesgue spaces and weighted central Morrey spaces, with weights of homogeneous types. In addition, we prove sufficient and and  necessary conditions of the weight functions so that commutators of such weighted multilinear Hardy-Ces\'{a}ro operators (with symbols in $\lambda$-central BMO space) are bounded on the product of central Morrey spaces. Since $U^{m,n}_{\psi,\overrightarrow{s}}$ is trivially more general than known operators $U_\psi$, $U^m_\psi$, our results can be used to recover main results of \cite{chuong1,fu1, fu2,xiao}.

 \vskip12pt
Throughout the whole paper, the letter $C$ will indicate an absolute constant, probably different at different occurrences. With $\chi_E$ we will denote the characteristic function of a set $E$ and $B(x,r)$ will be a ball centered at $x$ with radius $r$. With $|A|$ we will denote the Lebesgue measure of a measurable set $E$, and $E^c$ will be the set $\mathbb R^d\setminus E$. With $\omega(E)$ we will denote by $\int_E \omega(x)dx$ and $S_d$ will be the unit ball $\{x\in\mathbb R^d:\;|x|=1\}$.

\section{Notations and definitions}
Throughout this paper, $\omega(x)$ will be denote a nonnegative measurable function on $\mathbb R^d$. Let us recall  that a measurable function $f$ belongs to $L^p_\omega\left(\mathbb R^d\right)$ if
\begin{equation}\label{sec2eq1}
\|f\|_{L^p_\omega}=\left(\int_{\mathbb R^d}|f(x)|^p\omega(x)dx\right)^{1/p}<\infty.
\end{equation}
The weighted $BMO$ space $BMO(\omega)$ is defined as the set of all functions $f$, which are of bounded mean oscillation with weight $\omega$, that is,
\begin{equation}\label{sec2eq2}
\|f\|_{BMO(\omega)}=\sup\limits_B\frac1{\omega(B)}\int_B|f(x)-f_{B,\omega}|\omega(x)dx<\infty,
\end{equation}
where supremum is taken over all the balls $B\subset \mathbb R^d$. Here  $f_{B,\omega}$ is the mean value of $f$ on $B$ with weight $\omega$:
\[
f_{B,\omega}=\frac1{\omega(B)}\int_Bf(x)\omega(x)dx.
\]
The case $\omega\equiv1$ of (\ref{sec2eq2}) corresponds to the class of functions of bounded mean oscillation of F. John and L. Nirenberg \cite{john}. We obverse that $L^\infty(\mathbb R^d)\subset BMO(\omega)$. 
\vskip12pt
Next we recall the definition of Morrey spaces. It is well-known that Morrey spaces are useful to study the local behavior of solutions to second-order elliptic partial differential equations and the boundedness of Hardy-Littlewood maximal operator, the fractional integral operators, singular integral operators (see \cite{adam,chiarenza,komori}). We notice that the weighted Morrey spaces were first introduced by Komori and Shirai \cite{komori}, where they used them to study the boundedness of some important classical operators in harmonic analysis like as Hardy-Littlewood maximal operator, Calder\'{o}n-Zygmund operators.  
\begin{definition}\label{sec4def1} Let $\lambda \in\mathbb R$, $1\leq p<\infty$ and $\omega$ be an weight function. The weighted Morrey space $L^{p,\lambda}_{\omega}(\mathbb R^d)$ is defined by the set of all locally $p-$integrable functions $f$ satisfying 
\begin{equation}\label{sec2eq3}
\|f\|_{L^{p,\lambda}_{\omega}(\mathbb R^d)}=\sup_{a\in\mathbb R^d,R>0}\left(\frac1{\omega\left(B(a,R)\right)^{1+\lambda p}}\int_{B(0,R)}|f(x)|^p\omega(x)dx\right)^{1/p}<\infty.
\end{equation} 
\end{definition}
The spaces of bounded central mean oscillation $CMO^q\left(\mathbb R^d\right)$ appears naturally when considering the dual spaces of the homogeneous Herz type Hardy spaces and were introduced by Lu and Yang (see \cite{lu1}). The relationships between central BMO spaces and Morrey spaces were studied by Alvarez, Guzm\'{a}n-Partida and Lakey \cite{alvarez}. Furthermore, they introduced $\lambda-$central BMO spaces and central Morrey spaces as follows.  
\begin{definition}\label{sec4def2} Let $\lambda \in\mathbb R$ and $1<p<\infty$. The weighted central Morrey space $\dot{B}^{p,\lambda}_{\omega}(\mathbb R^d)$ is defined by the set of all locally $p-$integrable functions $f$ satisfying 
\begin{equation}\label{sec2eq4}
\|f\|_{\dot{B}^{p,\lambda}_{\omega}(\mathbb R^d)}=\sup_{R>0}\left(\frac1{\omega\left(B(0,R)\right)^{1+\lambda p}}\int_{B(0,R)}|f(x)|^p\omega(x)dx\right)^{1/p}<\infty.
\end{equation} 
\end{definition}
Obviously, $\dot{B}^{p,\lambda}_{\omega}(\mathbb R^d)$ is  a Banach space and one can easily check that $\dot{B}^{p,\lambda}_{\omega}(\mathbb R^d)=\{0\}$ if $\lambda<-\frac1q$. Similar to the classical Morrey space, we only consider the case $-1/p\leq\lambda<0$.
\vskip12pt

\begin{definition}\label{sec2def4} Let $\lambda<\frac{1}{n}$ and $1<q<\infty$ be two real numbers. The weighted  space $CMO^{q,\lambda}_{\omega}\left(\mathbb R^d \right)$ is defined as the set of all function $f\in L^q_{\omega,loc}\left(\mathbb R^d \right)$ such that 
\begin{equation}\label{sec2eq8}
\|f\|_{{CMO}^{q,\lambda}_{\omega}\left(\mathbb R^d \right)} =\sup\limits_{R>0}\left(\frac{1}{\omega\left(B(0,R)\right)^{1+\lambda q}}\int_{B(0,R)}|f(x)-f_{B(0,R),\omega}|^q\omega(x)dx\right)^{1/q}<\infty.
\end{equation}
\end{definition}
\vskip12pt
Let $\rho$ be the measure on $(0,\infty)$ so that $\rho(E)=\int_E r^{d-1}dr$ and the map $\Phi(x)=\left(|x|,\frac x{|x|}\right)$. Then there exists an unique Borel measure $\sigma$ on $S_n$ such that $\rho\times\sigma$ is the Borel measure induced by $\Phi$ from Lebesgue measure on $\mathbb R^d$ ($d>1$). (see \cite[page 78]{folland} for more details). In one dimension case, it it conventional that $\int_{S_d}\omega(x)d\sigma(x)$ refers to $2\omega(1)$.  
\begin{definition}
Let $\alpha$ be a real number. Let $\mathcal W_{\alpha}$ be the set of all functions $\omega$ on $\mathbb R^d$, which are measurable, $\omega(x)>0$ for almost everywhere $x\in\mathbb R^d$, $0<\int_{S_d}\omega(y)d\sigma(y)<\infty$, and are absolutely homogeneous of degree $\alpha$, that is $\omega(tx)=|t|^\alpha\omega(x)$, for all $t\in\mathbb R\setminus\{0\}$, $x\in\mathbb R^d$.
\end{definition}
\vskip12pt
Let us describe some typical examples and properties of $\mathcal W_\alpha$. Note that, a weight $\omega\in\mathcal W_\alpha$ may not need to belong to  $L^1_{\rm loc}(\mathbb R^d)$. In fact, we observe that if $\omega\in \mathcal W_\alpha$, then $\omega\in L^1_{\rm loc}(\mathbb R^d)$ if and only if $\alpha>-d$.  If $d=1$, then $\omega(x)= c|x|^\alpha$, for some positive constant $c$. For $d\geq1$ and $\alpha\neq0$, $\omega(x)=|x|^\alpha$ is in $\mathcal W_\alpha$. If $\omega_1,\omega_2$ is in $\mathcal W_\alpha$, so is $\theta\omega_1+\lambda\omega_2$ for all $\theta,\lambda>0$. There are many other examples in case $d>1$ and $\alpha\neq0$, namely $\omega(x_1,\ldots,x_d)=|x_1|^\alpha$. In case $d>1$ and $\alpha=0$, we can construct a non-trivial example as the following: let $\phi$ be any positive, even and locally integrable function on $S_d=\{x\in\mathbb R^d:\;|x|=1\}$, then
\[
\omega(x)=
\begin{cases} 
\phi\left(\dfrac x{|x|}\right)\qquad\text{if $x\neq0$},&\\
\quad0\quad\;\quad\qquad\text{if $x=0$},&\\
\end{cases}
\]
 is in $\mathcal W_0$.

\section{Sharp boundedness of $U^{m,n}_{\psi,\vec{s}}$ on the product of weighted Lebesgue spaces}
Before stating our results we give some notations and definitions.
\begin{definition}For $m$ exponents $1\leq p_j<\infty$, $j=1,\ldots,m$ and  $\alpha_1,\ldots,\alpha_m>-d$ we will often write $p$ for the number given by  
$\frac1p=\frac1{p_1}+\cdots+\frac1{p_m}$ and   
\begin{equation}\label{sec3eq1}
\alpha:=\frac{p\alpha_1}{p_1}+\cdots+\frac{p\alpha_m}{p_m}.
\end{equation}
\end{definition}
\begin{definition} Given $\omega_k\in\mathcal W_{\alpha_k}$, $k=1,\ldots,m$, set 
\begin{equation}\label{sec3eq2}
\omega(x):=\prod\limits_{k=1}^m\omega_k^{p/p_k}(x).
\end{equation}
It is obvious that $\omega\in\mathcal W_\alpha$.  We say that $(\omega_1,\ldots,\omega_m)$ satisfies the $\mathcal W_{\overrightarrow{\alpha}}$ condition if 
\begin{equation}\label{sec3eq3}
\omega(S_d)\geq \prod_{k=1}^m\omega_k(S_d)^{\frac p{p_k}}.
\end{equation}
\end{definition}
Notice that the weights $\omega$ as defined in (\ref{sec3eq2}) were used in \cite{grafakos2} to obtain multilinear extrapolation results. On the other hand, Condition (\ref{sec3eq3}) holds for power weights. In fact,  (\ref{sec3eq3}) becomes to equality for $\omega_k(x)=|x|^{\alpha_k}$, $k=1,\ldots,m$.
\vskip12pt
First we will show the following result which can be viewed as an extension of Theorem \ref{sec1theo1} to the multilinear case. 
\begin{theorem}\label{sec3theo1} Let $s_1,\ldots,s_m:[0,1]^n\to\mathbb R$ be measurable functions such that for every $k=1,\ldots,m$  then $|s_k(t_1,\ldots,t_n)|\geq \min\{t_1^\beta,\ldots,t_d^\beta\}$ almost everywhere $(t_1,\ldots,t_n)\in[0,1]^n$, $k=1,\ldots,m$, for some $\beta>0$. Let $1\leq p_k<\infty$, $k=1,\ldots,m$ and $\overrightarrow{s}=(s_1,\ldots,s_m)$. Assume that $(\omega_1,\ldots,\omega_m)$ satisfies $\mathcal W_{\overrightarrow{\alpha}}$ condition. Then $U^{m,n}_{\psi,\overrightarrow{s}}$ is bounded from $L^{p_1}_{\omega_1}\left(\mathbb R^d\right)\times\cdots\times L^{p_m}_{\omega_m}\left(\mathbb R^d\right)$ to $L^{p}_{\omega}\left(\mathbb R^d\right)$ if and only if 
\begin{equation}\label{maineq1}
{\mathcal A}=\int_{[0,1]^n}\left(\prod_{k=1}^m|s_k(t)|^{-\frac{d+\alpha_k}{p_k}}\right)\psi(t)dt<\infty.
\end{equation}
Furthermore, 
\begin{equation}\label{sec3eq4}
\left\|U^{m,n}_{\psi,\overrightarrow{s}}\right\|_{L^{p_1}_{\omega_1}\left(\mathbb R^d\right)\times\cdots\times L^{p_m}_{\omega_m}\left(\mathbb R^d\right)\to L^{p}_{\omega}\left(\mathbb R^d\right)}={\mathcal A}.
\end{equation}
\end{theorem}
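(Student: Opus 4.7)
The plan is to establish the two inequalities $\|U^{m,n}_{\psi,\overrightarrow{s}}\|_{op}\leq\mathcal{A}$ and $\|U^{m,n}_{\psi,\overrightarrow{s}}\|_{op}\geq\mathcal{A}$ separately; the characterization by $\mathcal{A}<\infty$ and the exact value of the norm then follow at once. For the upper bound I would apply Minkowski's integral inequality to bring the $L^p_\omega$-norm under the outer integral:
\[\bigl\|U^{m,n}_{\psi,\overrightarrow{s}}(f_1,\ldots,f_m)\bigr\|_{L^p_\omega}\leq\int_{[0,1]^n}\Bigl\|\prod_{k=1}^m f_k(s_k(t)\cdot)\Bigr\|_{L^p_\omega}\psi(t)\,dt.\]
Since $\omega=\prod_k\omega_k^{p/p_k}$ with $\sum_k p/p_k=1$, Hölder's inequality with exponents $p_k/p$ yields $\bigl\|\prod_k f_k(s_k(t)\cdot)\bigr\|_{L^p_\omega}\leq\prod_k\|f_k(s_k(t)\cdot)\|_{L^{p_k}_{\omega_k}}$. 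The dilation $y=s_k(t)x$ combined with the homogeneity $\omega_k(\lambda x)=|\lambda|^{\alpha_k}\omega_k(x)$ gives $\|f_k(s_k(t)\cdot)\|_{L^{p_k}_{\omega_k}}=|s_k(t)|^{-(d+\alpha_k)/p_k}\|f_k\|_{L^{p_k}_{\omega_k}}$. Assembling these three estimates produces $\|U^{m,n}_{\psi,\overrightarrow{s}}(f_1,\ldots,f_m)\|_{L^p_\omega}\leq\mathcal{A}\prod_k\|f_k\|_{L^{p_k}_{\omega_k}}$; the $\mathcal{W}_{\overrightarrow{\alpha}}$ condition is not needed in this direction.

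For the reverse estimate I would test the operator on the radial power family
\[f_k^\epsilon(x)=|x|^{-(d+\alpha_k)/p_k-\epsilon/p_k}\chi_{\{|x|>1\}}(x),\qquad \epsilon>0.\]
A polar computation using the homogeneity of $\omega_k$ gives $\|f_k^\epsilon\|_{L^{p_k}_{\omega_k}}^{p_k}=\omega_k(S_d)/\epsilon$, and using $\sum_k(d+\alpha_k)/p_k=(d+\alpha)/p$ one obtains $U^{m,n}_{\psi,\overrightarrow{s}}(f_1^\epsilon,\ldots,f_m^\epsilon)(x)=|x|^{-(d+\alpha+\epsilon)/p}J^\epsilon(|x|)$, where
\[J^\epsilon(r)=\int_{[0,1]^n}\prod_{k=1}^m|s_k(t)|^{-(d+\alpha_k)/p_k-\epsilon/p_k}\chi_{\{|s_k(t)|r>1\,\forall k\}}\psi(t)\,dt\]
is nondecreasing in $r$. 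A second polar computation delivers $\|U^{m,n}_{\psi,\overrightarrow{s}}(f_1^\epsilon,\ldots,f_m^\epsilon)\|_{L^p_\omega}^p=\omega(S_d)\int_1^\infty r^{-1-\epsilon}J^\epsilon(r)^p\,dr$. Dividing by $\prod_k\|f_k^\epsilon\|_{L^{p_k}_{\omega_k}}^p=\epsilon^{-1}\prod_k\omega_k(S_d)^{p/p_k}$ and invoking the $\mathcal{W}_{\overrightarrow{\alpha}}$ condition $\omega(S_d)\geq\prod_k\omega_k(S_d)^{p/p_k}$ yields
\[\frac{\|U^{m,n}_{\psi,\overrightarrow{s}}(f_1^\epsilon,\ldots,f_m^\epsilon)\|_{L^p_\omega}^p}{\prod_k\|f_k^\epsilon\|_{L^{p_k}_{\omega_k}}^p}\geq\epsilon\int_1^\infty r^{-1-\epsilon}J^\epsilon(r)^p\,dr.\]
Choosing a threshold $r_0$ large enough that $J^\epsilon(r_0)$ exceeds $\mathcal{A}-\eta$ for all small $\epsilon$ (and any fixed $\eta>0$), monotonicity of $J^\epsilon$ in $r$ bounds the right-hand side below by $(\mathcal{A}-\eta)^p r_0^{-\epsilon}\to(\mathcal{A}-\eta)^p$ as $\epsilon\to 0^+$. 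Letting $\eta\to 0$ produces $\|U^{m,n}_{\psi,\overrightarrow{s}}\|_{op}\geq\mathcal{A}$. When $\mathcal{A}=\infty$, the same family produces arbitrarily large ratios, forcing unboundedness.

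The main technical obstacle is justifying that $J^\epsilon(r_0)$ is close to $\mathcal{A}$ for $r_0$ large and $\epsilon$ small, which amounts to controlling the perturbation $|s_k(t)|^{-\epsilon/p_k}$ near the zero set $\{|s_k(t)|=0\}$. The hypothesis $|s_k(t)|\geq\min_j t_j^\beta$ is tailored precisely for this: it supplies the majorant $\prod_k|s_k(t)|^{-\epsilon/p_k}\leq(\min_j t_j)^{-\beta\epsilon/p}$, which is integrable against $\psi$ on $[0,1]^n$ for $\epsilon$ small. Dominated convergence for each fixed $r$ then gives $J^\epsilon(r)\to J^0(r)$ as $\epsilon\to 0^+$, while monotone convergence in $r$ delivers $J^0(r)\to\mathcal{A}$. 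The $\mathcal{W}_{\overrightarrow{\alpha}}$ condition is what ensures that the constant in the lower bound is exactly $\mathcal{A}$ (and not something strictly smaller), thereby yielding the sharp value of the operator norm.
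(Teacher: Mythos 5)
Your argument follows essentially the same route as the paper's: the upper bound (Minkowski, then H\"older with exponents $p_k/p$ against $\omega=\prod_k\omega_k^{p/p_k}$, then the dilation identity) is identical, and the lower bound uses the same truncated power functions $|x|^{-(d+\alpha_k)/p_k-\epsilon/p_k}\chi_{\{|x|>1\}}$ (your $\epsilon$ is the paper's $p\varepsilon$), with the $\mathcal W_{\overrightarrow{\alpha}}$ condition entering in exactly the same place, namely as $\omega(S_d)\geq\prod_k\omega_k(S_d)^{p/p_k}$ after dividing by $\prod_k\|f_k^\epsilon\|^p_{L^{p_k}_{\omega_k}}=\epsilon^{-1}\prod_k\omega_k(S_d)^{p/p_k}$. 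Your passage through the monotone function $J^\epsilon(r)$ and a threshold $r_0$ is a mild repackaging of the paper's restriction to $|x|\geq\varepsilon^{-\beta}$ and $t\in[\varepsilon,1]^n$, not a genuinely different method.

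One step fails as written: the dominated-convergence justification that $J^\epsilon(r_0)\to J^0(r_0)$. Your proposed majorant leads to the function $(\min_j t_j)^{-\beta\epsilon/p}\prod_k|s_k(t)|^{-(d+\alpha_k)/p_k}\psi(t)$, and its integrability over $[0,1]^n$ does \emph{not} follow from $\mathcal A<\infty$: take $n=m=1$, $s_1(t)=t^\beta$ and $\psi(t)=t^{\beta(d+\alpha_1)/p_1}\,t^{-1}\bigl(\log(e/t)\bigr)^{-2}$, for which $\mathcal A=\int_0^1 t^{-1}(\log(e/t))^{-2}\,dt<\infty$ while $\int_0^1 t^{-1-\beta\epsilon/p}(\log(e/t))^{-2}\,dt=\infty$ for every $\epsilon>0$. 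The repair is immediate and costs nothing: on the support of the indicator $\chi_{\{|s_k(t)|r>1\ \forall k\}}$ one has $|s_k(t)|^{-\epsilon/p_k}\leq r^{\epsilon/p_k}$, so the integrand of $J^\epsilon(r)$ is dominated by $r^{\epsilon_0/p}$ times the integrand of $J^0(r)$, which is integrable when $\mathcal A<\infty$; alternatively, Fatou's lemma already gives $\liminf_{\epsilon\to 0^+}J^\epsilon(r_0)\geq J^0(r_0)$, which is all your scheme needs and also handles the case $\mathcal A=\infty$. (A cosmetic point: your polar identity should read $\omega(S_d)\int_0^\infty r^{-1-\epsilon}J^\epsilon(r)^p\,dr$; since you only use ``$\geq\int_1^\infty$'' this is harmless.) With that substitution the proof is complete and coincides in substance with the paper's.
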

\begin{proof}First we take $\alpha=\sum\limits_{k=1}^m\frac{p\alpha_k}{p_k}$ as in (\ref{sec3eq1}), then it is trivial that $\omega\in {\mathcal W}_\alpha$. Suppose that $\mathcal A$ is finite. Let $f_k\in L^{p_k}_{\omega_k}\left(\mathbb R^d\right)$. Applying Minkowski's inequality, H\"{o}lder's inequality and change of variable, we have  
\begin{align*}
\left\|U^{m,n}_{\psi,\overrightarrow{s}}\left(f_1,\ldots,f_m\right)\right\|_{L^{p}_\omega\left(\mathbb R^d\right)}\leq&\;\left( \int_{\mathbb R^d}\left(\int_{[0,1]^n}\prod\limits_{k=1}^m\left|f_k\left(s_k(t)x\right)\right|\psi(t)dt\right)^p\omega(x)dx\right)^{1/p}\\
\leq&\;\int_{[0,1]^n}\left(\int_{\mathbb R^d}\prod\limits_{k=1}^m\left|f_k\left(s_k(t)x\right)\right|^p\omega(x)dx\right)^{1/p}\psi(t)dt\\
\leq&\;\int_{[0,1]^n}\prod_{k=1}^m\left(\int_{\mathbb R^d}\prod\limits_{k=1}^m\left|f_k\left(s_k(t)x\right)\right|^{p_k}\omega_k(x)dx\right)^{1/{p_k}}\psi(t)dt\\
=&\;{\mathcal A}\cdot\prod_{k=1}^m\|f_k\|_{p_k,\omega_k}.
\end{align*}
The last inequality implies that $U^{m,n}_{\psi,\overrightarrow{s}}$ is bounded from $L^{p_1}_{\omega_1}\left(\mathbb R^d\right)\times\cdots\times L^{p_m}_{\omega_m}\left(\mathbb R^d\right)$ to $L^{p}_{\omega}\left(\mathbb R^d\right)$ and 
\begin{align}\label{sec3eq6}
\left\|U^{m,n}_{\psi,\overrightarrow{s}}\right\|_{L^{p_1}_{\omega_1}\left(\mathbb R^d\right)\times\cdots\times L^{p_m}_{\omega_m}\left(\mathbb R^d\right)\to L^{p}_{\omega}\left(\mathbb R^d\right)}\leq {\mathcal A}.
\end{align}
In order to prove the converse of the theorem, we first need the following lemma.
\begin{lemma}\label{mainlem1} Let $w\in {\mathcal W}_\alpha$, $\alpha>-d$ and $\varepsilon>0$. Then the function 
\[
f_{p,\varepsilon}(x)=
\begin{cases}
0\qquad\qquad\quad\;\;\text{if}\quad|x|\leq1&\\
|x|^{-\frac{d+\alpha}p-\varepsilon}\qquad\text{if}\quad|x|\geq1,
\end{cases}
\]
belongs to $L^p_w\left(\mathbb R^d\right)$ and $\|f_{p,\varepsilon}\|_{p,w}=\left(\frac{w(S_d)}{p\varepsilon}\right)^{1/p}$.
\end{lemma}
Since the proof of the lemma is straightforward, we omit it. Now we shall prove the theorem. Let $\varepsilon$ be  an arbitrary positive number and  for each $k=1,\ldots,m$ we set $\varepsilon_k=\frac{p\varepsilon}{p_k}$ and
\[
f_{p_k,\varepsilon_k}(x)=
\begin{cases}
0\qquad\qquad\qquad\;\text{if}\quad|x|\leq1&\\
|x|^{-\frac{d+\alpha_k}{p_k}-\varepsilon_k}\qquad\text{if}\quad|x|\geq1.
\end{cases}
\]
Lemma \ref{mainlem1} implies that $f_{p_k,\varepsilon_k}\in L^{p_k}_{\omega_k}\left(\mathbb R^d\right)$ and 
\[
\|f_{p_k,\varepsilon_k}\|_{p_k,\omega_k}=\left(\frac{\omega_k(S_d)}{p_k\varepsilon_k}\right)^{1/p_k}=\left(\frac{\omega_k(S_d)}{p\varepsilon }\right)^{1/p_k}>0,
\]
for each $k=1,\ldots,m$. For each $x\in\mathbb R^d$ which $|x|\geq1$, let
\[
S_x=\bigcap\limits_{k=1}^m\{t\in[0,1]^n:\;|s_k(t)x|>1\}.
\]
From the assumption 
$|s_k(t_1,\ldots,t_n)|\geq \min\{t_1^\beta,\ldots,t_n^\beta\}$ a.e $t=(t_1,\ldots,t_n)\in[0,1]^n$, there exists a null subset $E$ of $[0,1]^n$ so that $S_x$ contains $\left[1/|x|^{1/\beta},1\right]^n\setminus E$. From (\ref{sec2eq1}), we have
\begin{align*}
\;&\|U^{m,n}_{\psi,\overrightarrow{s}}\left(f_{p_1,\varepsilon_1},\ldots,f_{p_m,\varepsilon_m}\right)\|_{L^p_\omega\left(\mathbb R^d\right)}^p=\int_{\mathbb R^d}\left|\int_{[0,1]^n}\left(\prod\limits_{k=1}^mf_k\left(s_k(t)x\right)\right)\psi(t)dt\right|^p\omega(x)dx\\
=\;& \int_{\mathbb R^d}\prod_{k=1}^m|x|^{-\frac{d+\alpha_k}{p_k}-\varepsilon_k}\omega(x)\left|\int_{S_x}\prod_{k=1}^m|s_k(t)|^{-\frac{d+\alpha_k}{p_k}-\varepsilon_k}\psi(t)dt\right|^pdx\\
\geq\;& \int_{|x|\geq \varepsilon^{-\beta}}|x|^{-(d+\alpha+\varepsilon p)}\omega(x)\left|\,\int_{[\varepsilon,1]^n}\prod_{k=1}^m|s_k(t)|^{-\frac{d+\alpha_k}{p_k}-\varepsilon_k}\psi(t)dt\right|^pdx\\
=\;& \varepsilon^{p\beta\varepsilon}\left(\int_{|x|\geq1}|x|^{-(d+\alpha+\varepsilon p)}\omega(x)dx\right)\left|\,\int_{[\varepsilon,1]^n}\prod_{k=1}^m|s_k(t)|^{-\frac{d+\alpha_k}{p_k}-\varepsilon_k}\psi(t)dt\right|^p \\
=\;& \varepsilon^{p\beta\varepsilon}\left(\frac{\omega(S_d)}{\varepsilon p}\right)\left|\,\int_{[\varepsilon,1]^n}\prod_{k=1}^m|s_k(t)|^{-\frac{d+\alpha_k}{p_k}-\varepsilon_k}\psi(t)dt\right|^p \\
\geq\;& \varepsilon^{p\beta\varepsilon}\prod_{k=1}^m\|f_{p_k,\epsilon_k}\|_{p_k,\varepsilon_k}^{p/p_k}\left|\,\int_{[\varepsilon,1]^n}\prod_{k=1}^m|s_k(t)|^{-\frac{d+\alpha_k}{p_k}-\varepsilon_k}\psi(t)dt\right|^p. \\
\end{align*}
This implies that 
\begin{align*}
\left\|U^{m,n}_{\psi,\overrightarrow{s}}\right\|_{L^{p_1}_{\omega_1}\left(\mathbb R^d\right)\times\cdots\times L^{p_m}_{\omega_m}\left(\mathbb R^d\right)\to L^{p}_{\omega}\left(\mathbb R^d\right)}\geq  \varepsilon^{\beta\varepsilon}\cdot\left|\,\int_{[\varepsilon,1]^n}\prod_{k=1}^m|s_k(t)|^{-\frac{d+\alpha_k}{p_k}-\varepsilon_k}\psi(t)dt\right|.
\end{align*}
Notice that 
\[
|s_k(t)|^{-\varepsilon}\leq\min\{t_1,\ldots,t_n\}^{-\varepsilon\beta}\leq\varepsilon^{-\varepsilon\beta}\to1\quad\text{when $\varepsilon\to0^+$},
\]
 Thus, letting $\varepsilon\to0^+$ and by Lebesgue's dominated convergence theorem, we obtain
\begin{align}\label{sec3eq7}
\left\|U^{m,n}_{\psi,\overrightarrow{s}}\right\|_{L^{p_1}_{\omega_1}\left(\mathbb R^d\right)\times\cdots\times L^{p_m}_{\omega_m}\left(\mathbb R^d\right)\to L^{p}_{\omega}\left(\mathbb R^d\right)}\geq \mathcal A.
\end{align}
Combine (\ref{sec3eq6}) and (\ref{sec3eq7}), we obtain the result.

\end{proof}
Analogous to the proof of Theorem \ref{sec3theo1}, one can prove the following result. 
\begin{theorem}\label{sec3theo1a} Let $s_1,\ldots,s_m:[0,1]^n\to\mathbb R$ be measurable functions such that for every $k=1,\ldots,m$  then $|s_k(t_1,\ldots,t_n)|\geq \min\{t_1^\beta,\ldots,t_d^\beta\}$ almost everywhere $(t_1,\ldots,t_n)\in[0,1]^n$, $k=1,\ldots,m$, for some $\beta>0$. Let $1\leq p_k<\infty$, $k=1,\ldots,m$ and $\overrightarrow{s}=(s_1,\ldots,s_m)$. Assume that $(\omega_1,\ldots,\omega_m)$ satisfies $\mathcal W_{\overrightarrow{\alpha}}$ condition. Then $\mathcal H^{m,n}_{\psi,\overrightarrow{s}}$ is bounded from $L^{p_1}_{\omega_1}\left(\mathbb R^d\right)\times\cdots\times L^{p_m}_{\omega_m}\left(\mathbb R^d\right)$ to $L^{p}_{\omega}\left(\mathbb R^d\right)$ if and only if 
\begin{equation}\label{maineq1a}
{\mathcal A}_{\star}=\int_{\mathbb R_+^n}\left(\prod_{k=1}^m|s_k(t)|^{-\frac{d+\alpha_k}{p_k}}\right)\psi(t)dt<\infty.
\end{equation}
Furthermore, 
\begin{equation}\label{sec3eq4a}
\left\|\mathcal H^{m,n}_{\psi,\overrightarrow{s}}\right\|_{L^{p_1}_{\omega_1}\left(\mathbb R^d\right)\times\cdots\times L^{p_m}_{\omega_m}\left(\mathbb R^d\right)\to L^{p}_{\omega}\left(\mathbb R^d\right)}={\mathcal A}_\star.
\end{equation}
\end{theorem}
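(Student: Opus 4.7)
The plan is to mimic the proof of Theorem \ref{sec3theo1}, replacing the cube $[0,1]^n$ by the open quadrant $\mathbb{R}_+^n$ throughout.

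For the sufficiency direction, assuming $\mathcal{A}_\star < \infty$, I would apply Minkowski's integral inequality to interchange the $L^p_\omega$-norm with the outer integral, then Hölder's inequality with exponents $p/p_k$ to separate the $m$ factors, and finally the change of variables $y = s_k(t)x$ combined with the absolute homogeneity $\omega_k(s_k(t)x) = |s_k(t)|^{\alpha_k}\omega_k(x)$ to produce the kernel $|s_k(t)|^{-(d+\alpha_k)/p_k}$. The computation is word-for-word that of Theorem \ref{sec3theo1} except for the integration domain, and yields $\|\mathcal{H}^{m,n}_{\psi,\overrightarrow{s}}(f_1,\ldots,f_m)\|_{L^p_\omega} \le \mathcal{A}_\star \prod_k \|f_k\|_{p_k,\omega_k}$.

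For the necessity direction, I would test against the same family of functions $f_{p_k,\varepsilon_k}$ provided by Lemma \ref{mainlem1}. For $|x| \ge 1$, the quantity $\mathcal{H}^{m,n}_{\psi,\overrightarrow{s}}(f_{p_1,\varepsilon_1},\ldots,f_{p_m,\varepsilon_m})(x)$ factors as $|x|^{-(d+\alpha)/p - \varepsilon} \int_{S_x}\prod_k |s_k(t)|^{-(d+\alpha_k)/p_k - \varepsilon_k}\psi(t)\,dt$, where $S_x := \{t \in \mathbb{R}_+^n : |s_k(t)x| \ge 1 \text{ for all } k\}$. Using the hypothesis $|s_k(t)| \ge \min\{t_1^\beta,\ldots,t_n^\beta\}$, for each fixed $N \ge 1$ the set $S_x$ contains $[\varepsilon, N]^n$ whenever $|x| \ge \varepsilon^{-\beta}$. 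Carrying out the same $L^p_\omega$-norm computation as in the proof of Theorem \ref{sec3theo1}, invoking the $\mathcal{W}_{\overrightarrow{\alpha}}$ condition, and dividing by $\prod_k \|f_{p_k,\varepsilon_k}\|_{p_k,\omega_k}$, I obtain
\[
\bigl\|\mathcal{H}^{m,n}_{\psi,\overrightarrow{s}}\bigr\|_{L^{p_1}_{\omega_1}\times\cdots\times L^{p_m}_{\omega_m}\to L^p_\omega} \;\ge\; \varepsilon^{\beta\varepsilon}\int_{[\varepsilon,N]^n}\prod_{k=1}^m|s_k(t)|^{-(d+\alpha_k)/p_k - \varepsilon_k}\psi(t)\,dt.
\]
Letting first $N \to \infty$ and then $\varepsilon \to 0^+$, and applying the monotone convergence theorem together with the bound $|s_k(t)|^{-\varepsilon_k} \le \varepsilon^{-\varepsilon\beta} \to 1$, I recover the lower bound $\mathcal{A}_\star$.

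The principal new difficulty is the unbounded integration domain: in Theorem \ref{sec3theo1} the truncation $[\varepsilon,1]^n$ exhausts $[0,1]^n$ as $\varepsilon \to 0^+$, whereas here the box $[\varepsilon, N]^n$ exhausts $\mathbb{R}_+^n$ only upon sending both $\varepsilon \to 0^+$ and $N \to \infty$. The two limits must be ordered correctly, and one must confirm that $|s_k(t)|^{-\varepsilon_k}$, which is no longer uniformly bounded once $t$ ranges over $\mathbb{R}_+^n$, does not obstruct passage to the limit. This is handled by fixing $\varepsilon$ first, sending $N \to \infty$ by monotone convergence on the nonnegative integrand, and only afterwards sending $\varepsilon \to 0^+$, at which point the harmless factor $\varepsilon^{\beta\varepsilon} \to 1$ is recovered as in the bounded case.
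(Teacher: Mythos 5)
Your proof is correct and is exactly the argument the paper intends: the paper gives no separate proof of this theorem, stating only that it is ``analogous to the proof of Theorem \ref{sec3theo1}'', and your adaptation --- the identical sufficiency computation over $\mathbb{R}_+^n$, the same test functions $f_{p_k,\varepsilon_k}$ for necessity, and the exhaustion of $\mathbb{R}_+^n$ by the boxes $[\varepsilon,N]^n$ with the limits taken in the order $N\to\infty$ (monotone convergence) and then $\varepsilon\to 0^+$ --- is precisely the natural way to carry that out, and you correctly isolate the unbounded domain as the only genuinely new point. One cosmetic remark: in the final limit $\varepsilon\to 0^+$ the clean justification is Fatou's lemma applied to the nonnegative integrands converging pointwise (the upper bound $|s_k(t)|^{-\varepsilon_k}\le\varepsilon^{-\varepsilon\beta}$ points the wrong way for a lower bound), but the paper's own proof of Theorem \ref{sec3theo1} is equally informal on this step, so this is not a gap specific to your argument.
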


\begin{theorem}\label{sec3theo3} Let $1\leq p,p_k<\infty$, $ \lambda,\alpha_k,\lambda_k$ be real numbers such that $-\frac1{p_k}\leq \lambda_k<0$, $k=1,\ldots,m$  , $\frac1p=\frac1{p_1}+\cdots+\frac1{p_m}$,  and  $\lambda=\frac{d+\alpha_1}{d+\alpha}\lambda_1+\cdots+\frac{d+\alpha_m}{d+\alpha}\lambda_m$. 
Let $\omega_k\in {\mathcal W}_{\alpha_k}$ for $k=1,\ldots,m$.
\begin{enumerate}
\item[{(i)}] If in addition 
\begin{equation}\label{sec3eq8}
\left(\frac{\omega(S_d)}{d+\alpha}\right)^{\frac{1+\lambda p}p}\geq\prod_{k=1}^m\left(\frac{\omega_k(S_d)}{d+\alpha_k}\right)^{\frac{1+\lambda_k p_k}{p_k}}
\end{equation}
and 
\begin{equation}\label{sec3eq9}
{\mathcal B}=\int_{[0,1]^n}\left(\prod_{k=1}^m|s_k(t)|^{-\frac{(d+\alpha_k)\lambda_k}{p_k}}\right)\psi(t)dt<\infty,
\end{equation}
then $U^{m,n}_{\psi,\overrightarrow{s}}$ is bounded from $\dot{B}^{p_1,\lambda_1}_{\omega_1}\left(\mathbb R^d\right)\times\cdots\times \dot{B}^{p_m,\lambda_m}_{\omega_m}\left(\mathbb R^d\right)$ to   $\dot{B}^{p,\lambda}_{\omega}\left(\mathbb R^d\right)$. 
Furthermore, the operator norm of $U^{m,n}_{\psi,\overrightarrow{s}}$ not more than $\mathcal B$. 
\item[{(ii)}] Conversely, if 
\begin{equation}\label{sec3eq10}
\left(\frac{\omega(S_d)}{d+\alpha}\right)^{\lambda }\left(1+\lambda p\right)^{1/p}\leq\prod_{k=1}^m \left(\frac{\omega(S_d)}{d+\alpha_k}\right)^{\lambda_k }\left(1+\lambda_k p_k\right)^{1/p_k}
\end{equation}
and $U^{m,n}_{\psi,\overrightarrow{s}}$ is bounded from $\dot{B}^{p_1,\lambda_1}_{\omega_1}\left(\mathbb R^d\right)\times\cdots\times \dot{B}^{p_m,\lambda_m}_{\omega_m}\left(\mathbb R^d\right)$ to   $\dot{B}^{p,\lambda}_{\omega}\left(\mathbb R^d\right)$, then $\mathcal B$ is finite. Furthermore, we have
\begin{equation}\label{sec3eq11}
\left\|U^{m,n}_{\psi,\overrightarrow{s}}\right\|_{\dot{B}^{p_1,\lambda_1}_{\omega_1}\left(\mathbb R^d\right)\times\cdots\times \dot{B}^{p_m,\lambda_m}_{\omega_m}\left(\mathbb R^d\right)\to L^{p}_{\omega}\left(\mathbb R^d\right)}\geq {\mathcal B}.
\end{equation}
\end{enumerate}
\end{theorem}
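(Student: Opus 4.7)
For part (i), I would follow the same Minkowski--Hölder--change of variables template that already worked for Theorem \ref{sec3theo1}, adapted to central Morrey spaces. Fix a ball $B(0,R)$ centered at the origin. Minkowski's integral inequality moves the $L^p_\omega(B(0,R))$-norm inside the outer integral over $[0,1]^n$. For each fixed $t$, Hölder's inequality with exponents $p_k/p$ (using $\sum_k p/p_k = 1$ and $\omega = \prod_k \omega_k^{p/p_k}$) then separates the product of the $f_k$'s into $\prod_k \|f_k(s_k(t)\,\cdot\,)\|_{L^{p_k}_{\omega_k}(B(0,R))}$. A change of variables $y = s_k(t)x$ combined with the $\alpha_k$-homogeneity of $\omega_k$ converts this into an integral over $B(0,|s_k(t)|R)$ with prefactor $|s_k(t)|^{-(d+\alpha_k)}$.

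The next step is to replace each such integral by the central Morrey bound $\|f_k\|_{\dot B^{p_k,\lambda_k}_{\omega_k}}^{p_k}\omega_k(B(0,|s_k(t)|R))^{1+\lambda_k p_k}$. For $\omega_k \in \mathcal W_{\alpha_k}$ with $\alpha_k > -d$, the polar-coordinate identity $\omega_k(B(0,r)) = \tfrac{\omega_k(S_d)}{d+\alpha_k}\,r^{d+\alpha_k}$ makes every factor explicit, so that after dividing by $\omega(B(0,R))^{(1+\lambda p)/p}$ all powers of $R$ cancel, thanks to $\sum_k (d+\alpha_k)/p_k = (d+\alpha)/p$ and the defining relation $\lambda = \sum_k \tfrac{d+\alpha_k}{d+\alpha}\lambda_k$. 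Hypothesis (\ref{sec3eq8}) is exactly what forces the remaining constant prefactor to be at most $1$, and integrating over $t$ against $\psi(t)\,dt$ produces the factor $\mathcal B$, completing the upper bound.

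For part (ii), I would test $U^{m,n}_{\psi,\overrightarrow{s}}$ on the power functions $f_k(x) = |x|^{(d+\alpha_k)\lambda_k}$, which are the natural extremizers of $\dot B^{p_k,\lambda_k}_{\omega_k}$. A direct computation in polar coordinates gives the explicit norm $\|f_k\|_{\dot B^{p_k,\lambda_k}_{\omega_k}} = (1+\lambda_k p_k)^{-1/p_k}\bigl(\omega_k(S_d)/(d+\alpha_k)\bigr)^{-\lambda_k}$, and substitution into the definition of $U^{m,n}_{\psi,\overrightarrow{s}}$ together with the identity $\sum_k (d+\alpha_k)\lambda_k = (d+\alpha)\lambda$ shows that the image is again a power function of $|x|$ whose norm on the target space is computed by the same polar formula. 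Feeding this into the boundedness hypothesis and isolating $\mathcal B$, hypothesis (\ref{sec3eq10}) is precisely what cancels the leftover constants and yields $\mathcal B \leq \|U^{m,n}_{\psi,\overrightarrow{s}}\|$.

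The main obstacle I foresee is the careful bookkeeping of all the constants --- each $\omega_k(S_d)$, $d+\alpha_k$, $1+\lambda_k p_k$ --- and of the powers of $R$, and verifying that they collapse exactly under the two asymmetric hypotheses (\ref{sec3eq8}) and (\ref{sec3eq10}). A secondary technical point is the boundary case $\lambda_k = -1/p_k$, where $1+\lambda_k p_k = 0$, the space $\dot B^{p_k,\lambda_k}_{\omega_k}$ collapses to $L^{p_k}_{\omega_k}$, and the power test function degenerates; this can be handled by a small $\varepsilon$-perturbation of the exponent followed by a limit argument, in direct analogy with the function $f_{p,\varepsilon}$ of Lemma \ref{mainlem1}.
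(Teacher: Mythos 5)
Your proposal matches the paper's own proof essentially step for step: part (i) is the same Minkowski--H\"older--change-of-variables argument on balls $B(0,R)$, with the homogeneity of the $\omega_k$ producing the factor $\prod_k|s_k(t)|^{(d+\alpha_k)\lambda_k}$ and the hypothesis on the quotients $\omega_k(S_d)/(d+\alpha_k)$ absorbing the remaining constants, while part (ii) tests on exactly the same power functions $f_k(x)=|x|^{(d+\alpha_k)\lambda_k}$ and uses the second constant inequality to pass from $\prod_k\|f_k\|$ to the single target-space norm. Your remark about the degenerate endpoint $\lambda_k=-1/p_k$, where the test function leaves the space and an $\varepsilon$-perturbation is needed, is a point the paper silently skips; otherwise the two arguments coincide.
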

Let $\omega_k\equiv 1$, we have that $\alpha_k=0$ thus (\ref{sec3eq8}) becomes equality and (\ref{sec3eq10}) holds only when $\lambda_1p_1=\cdots=\lambda_mp_m$. Hence, we obtain Theorem 2.1 in \cite{fu2}.

\begin{proof}
Since $\frac1p=\frac1{p_1}+\cdots+\frac1{p_m}$, by Minkowski's inequality and H\"{o}lder's inequality, we see that, for all balls $B=B(0,R)$,
\begin{align*}
\quad&\left(\frac1{\omega(B)^{1+\lambda p}}\int_B\left|U^{m,n}_{\psi,\overrightarrow{s}}\left(f_1,\ldots,f_m\right)(x)\right|^p\omega(x)dx\right)^{1/p}\\
\leq\;&\int_{[0,1]^n}\left(\frac1{\omega(B)^{1+\lambda p}}\int_B\left|\prod_{k=1}^mf_k\left(s_k(t)x\right)\right|^p\omega(x)dx\right)^{1/p}\psi(t)dt\\
\leq \;&\int_{[0,1]^n}\prod_{k=1}^m\left(\frac1{\omega_k(B)^{1+\lambda_k p_k}}\int_B\left|f_k\left(s_k(t)x\right)\right|^{p_k}\omega_k(x)dx\right)^{1/p_k}\psi(t)dt\\
\end{align*} 
\begin{align*}
\leq \;&\int_{[0,1]^n}\prod_{k=1}^m\left(\frac1{|s_k(t)B|^{1+\lambda_k p_k}}\int_B\left|f_k\left(y\right)\right|^{p_k}\omega_k(y)dy\right)^{1/p_k}\left(\prod_{k=1}^m|s_k(t)|^{(d+\alpha_k)\lambda_k }\right)\psi(t)dt\\
\leq \;&\prod_{k=1}^m\|f_k\|_{\dot{B}^{p_k,\lambda_k}_{\omega_k}\left(\mathbb R^d\right)}\cdot\mathcal B.\\
\end{align*} 
This means that 
\begin{equation}\label{sec3eq12}
\left\|U^{m,n}_{\psi,\overrightarrow{s}}\right\|_{\dot{B}^{p_1,\lambda_1}_{\omega_1}\left(\mathbb R^d\right)\times\cdots\times \dot{B}^{p_m,\lambda_m}_{\omega_m}\left(\mathbb R^d\right)\to \dot{B}^{p,\lambda}_{\omega}\left(\mathbb R^d\right)}\leq {\mathcal B}.
\end{equation}
Now we assume that (\ref{sec3eq10}) holds and $U^{m,n}_{\psi,s}$ is bounded from $\dot{B}^{p_1,\lambda_1}_{\omega_1}\left(\mathbb R^d\right)\times\cdots\times \dot{B}^{p_m,\lambda_m}_{\omega_m}$ to $ \dot{B}^{p,\lambda}_{\omega}\left(\mathbb R^d\right)$. Let $f_k(x)=|x|^{(d+\alpha_k)\lambda_k}$, then an elementary computation shows that $f_k\in\dot{B}^{p_k,\lambda_k}_{\omega_k}\left(\mathbb R^d\right)$ and 
\[
\|f_k\|_{\dot{B}^{p_k,\lambda_k}_{\omega_k}\left(\mathbb R^d\right)}=\left(\frac{d+\alpha_k}{\omega(S_d)}\right)^{\lambda_k}\left(1+\lambda_k p_k\right)^{-1/p_k}.
\]
Thus,
\begin{align*}
\prod_{k=1}^m\|f_k\|_{\dot{B}^{p_k,\lambda_k}_{\omega_k}\left(\mathbb R^d\right)}=\;&\prod_{k=1}^m\left(\frac{d+\alpha_k}{\omega(S_d)}\right)^{\lambda_k}\left(1+\lambda_k p_k\right)^{-1/p_k}\\
\leq\;&\left(\frac{d+\alpha }{\omega(S_d)}\right)^{\lambda}\left(1+\lambda p\right)^{-1/p}.
\end{align*}
This leads us to
\[
\|U^{m,n}_{\psi,\overrightarrow{s}}(f_1,\ldots,f_m)\|_{\dot{B}^{p,\lambda}_{\omega}\left(\mathbb R^d\right)}={\mathcal B}\cdot\left(\frac{d+\alpha }{\omega(S_d)}\right)^{\lambda}\left(1+\lambda p\right)^{-1/p}\geq {\mathcal B}\cdot \prod_{k=1}^m\|f_k\|_{\dot{B}^{p_k,\lambda_k}_{\omega_k}\left(\mathbb R^d\right)}.
\]
Therefore,
\begin{equation*}
\left\|U^{m,n}_{\psi,\overrightarrow{s}}\right\|_{\dot{B}^{p_1,\lambda_1}_{\omega_1}\left(\mathbb R^d\right)\times\cdots\times \dot{B}^{p_m,\lambda_m}_{\omega_m}\left(\mathbb R^d\right)\to L^{p}_{\omega}\left(\mathbb R^d\right)}\geq {\mathcal B}.
\end{equation*}
\end{proof}
From the proof of Theorem \ref{sec3theo3}, we also obtain the similar result to $\mathcal H^{m,n}_{\psi,s}$ operator.

\begin{theorem}\label{sec3theo3} Let $1\leq p,p_k<\infty$, $ \lambda,\alpha_k,\lambda_k$ be real numbers such that $-\frac1{p_k}\leq \lambda_k<0$, $k=1,\ldots,m$  , $\frac1p=\frac1{p_1}+\cdots+\frac1{p_m}$,  and  $\lambda=\frac{d+\alpha_1}{d+\alpha}\lambda_1+\cdots+\frac{d+\alpha_m}{d+\alpha}\lambda_m$. 
Let $\omega_k\in {\mathcal W}_{\alpha_k}$ for $k=1,\ldots,m$.
\begin{enumerate}
\item[{(i)}] If in addition 
\begin{equation}\label{sec3eq8}
\left(\frac{\omega(S_d)}{d+\alpha}\right)^{\frac{1+\lambda p}p}\geq\prod_{k=1}^m\left(\frac{\omega_k(S_d)}{d+\alpha_k}\right)^{\frac{1+\lambda_k p_k}{p_k}}
\end{equation}
and 
\begin{equation}\label{sec3eq9}
{\mathcal B}_\star=\int_{\mathbb R_+^n}\left(\prod_{k=1}^m|s_k(t)|^{-\frac{(d+\alpha_k)\lambda_k}{p_k}}\right)\psi(t)dt<\infty,
\end{equation}
then $\mathcal H^{m,n}_{\psi,\overrightarrow{s}}$ is bounded from $\dot{B}^{p_1,\lambda_1}_{\omega_1}\left(\mathbb R^d\right)\times\cdots\times \dot{B}^{p_m,\lambda_m}_{\omega_m}\left(\mathbb R^d\right)$ to   $\dot{B}^{p,\lambda}_{\omega}\left(\mathbb R^d\right)$. 
Furthermore, the operator norm of $\mathcal H^{m,n}_{\psi,\overrightarrow{s}}$ not more than $\mathcal B_\star$. 
\item[{(ii)}] Conversely, if 
\begin{equation}\label{sec3eq10}
\left(\frac{\omega(S_d)}{d+\alpha}\right)^{\lambda }\left(1+\lambda p\right)^{1/p}\leq\prod_{k=1}^m \left(\frac{\omega(S_d)}{d+\alpha_k}\right)^{\lambda_k }\left(1+\lambda_k p_k\right)^{1/p_k}
\end{equation}
and $\mathcal H^{m,n}_{\psi,\overrightarrow{s}}$ is bounded from $\dot{B}^{p_1,\lambda_1}_{\omega_1}\left(\mathbb R^d\right)\times\cdots\times \dot{B}^{p_m,\lambda_m}_{\omega_m}\left(\mathbb R^d\right)$ to   $\dot{B}^{p,\lambda}_{\omega}\left(\mathbb R^d\right)$, then $\mathcal B_\star$ is finite. Furthermore, we have
\begin{equation}\label{sec3eq11}
\left\|\mathcal H^{m,n}_{\psi,\overrightarrow{s}}\right\|_{\dot{B}^{p_1,\lambda_1}_{\omega_1}\left(\mathbb R^d\right)\times\cdots\times \dot{B}^{p_m,\lambda_m}_{\omega_m}\left(\mathbb R^d\right)\to L^{p}_{\omega}\left(\mathbb R^d\right)}\geq {\mathcal B}_\star.
\end{equation}
\end{enumerate}
\end{theorem}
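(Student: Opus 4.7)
The plan is to transcribe the proof of the preceding theorem (the $U^{m,n}_{\psi,\vec{s}}$ version) almost verbatim, replacing the parameter domain $[0,1]^n$ by $\mathbb R_+^n$ and the constant $\mathcal B$ by $\mathcal B_\star$. All the structural ingredients, namely Minkowski's integral inequality, H\"older's inequality with $1/p=\sum 1/p_k$, the change of variables $y=s_k(t)x$, the homogeneity $\omega_k(tx)=|t|^{\alpha_k}\omega_k(x)$, and the rearrangement of the radial constants $\omega_k(S_d)/(d+\alpha_k)$ via (\ref{sec3eq8}) and (\ref{sec3eq10}), depend only pointwise on $t$ and are therefore insensitive to whether the parameter ranges over a cube or over a positive orthant.

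For part (i), fix a ball $B=B(0,R)$ and push the $L^p(B,\omega)$-norm of $\mathcal H^{m,n}_{\psi,\vec s}(f_1,\ldots,f_m)$ inside the $dt$-integral by Minkowski's inequality; then split by H\"older into a product of $L^{p_k}(B,\omega_k)$-norms of $|f_k(s_k(t)x)|$. The substitution $y=s_k(t)x$ maps $B(0,R)$ to $s_k(t)B=B(0,|s_k(t)|R)$, and the homogeneity of $\omega_k$ combines with the Jacobian to produce exactly the factor $|s_k(t)|^{(d+\alpha_k)\lambda_k}$ in front of a Morrey average of $|f_k|^{p_k}$ on $s_k(t)B$. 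Bounding this average by $\|f_k\|_{\dot B^{p_k,\lambda_k}_{\omega_k}}^{p_k}$ and using (\ref{sec3eq8}) to absorb the leftover $\omega_k(S_d)/(d+\alpha_k)$ factors into the normalising constant $\omega(S_d)/(d+\alpha)$ yields the norm bound by $\mathcal B_\star$.

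For part (ii), test the operator on the power functions $f_k(x):=|x|^{(d+\alpha_k)\lambda_k}$. A direct radial computation (already performed in the proof for $U^{m,n}_{\psi,\vec s}$) gives
$$\|f_k\|_{\dot B^{p_k,\lambda_k}_{\omega_k}} = \left(\frac{d+\alpha_k}{\omega_k(S_d)}\right)^{\lambda_k}(1+\lambda_k p_k)^{-1/p_k},$$
while the hypothesis $\lambda=\sum\frac{d+\alpha_k}{d+\alpha}\lambda_k$ together with the homogeneity of the $f_k$ allows one to pull $|x|^{(d+\alpha)\lambda}$ outside the defining integral, producing $\mathcal H^{m,n}_{\psi,\vec s}(f_1,\ldots,f_m)(x) = |x|^{(d+\alpha)\lambda}\cdot \mathcal B_\star$. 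Computing the $\dot B^{p,\lambda}_\omega$-norm of this function explicitly and rearranging using (\ref{sec3eq10}) yields $\mathcal B_\star\prod_k\|f_k\|_{\dot B^{p_k,\lambda_k}_{\omega_k}} \le \|\mathcal H^{m,n}_{\psi,\vec s}(\vec f)\|_{\dot B^{p,\lambda}_\omega}$, which combined with boundedness gives (\ref{sec3eq11}).

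The only subtlety absent from the compact-domain proof is the a priori possibility that $\mathcal B_\star=+\infty$. This is excluded by the following observation: since the $f_k$ above are non-negative, the identity $\mathcal H^{m,n}_{\psi,\vec s}(\vec f)(x) = |x|^{(d+\alpha)\lambda}\,\mathcal B_\star$ remains meaningful with values in $[0,\infty]$, so $\mathcal B_\star=+\infty$ would force the Morrey norm of the left-hand side to be infinite while $\prod_k\|f_k\|_{\dot B^{p_k,\lambda_k}_{\omega_k}}$ stays finite, contradicting the assumed boundedness of $\mathcal H^{m,n}_{\psi,\vec s}$. If one prefers a more constructive route, the same conclusion is reached by first replacing $\mathbb R_+^n$ with the truncated parameter set $[\varepsilon,N]^n$, applying the previous theorem's argument verbatim to obtain the bound on the truncated quantity $\mathcal B_\star^{\varepsilon,N}$, and then letting $\varepsilon\to 0^+$ and $N\to\infty$ by monotone convergence. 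This truncation remark is the only real point to verify; everything else is a direct translation of the previous proof.
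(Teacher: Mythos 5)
Your proposal is correct and takes essentially the same approach as the paper, which offers no separate argument for $\mathcal H^{m,n}_{\psi,\overrightarrow{s}}$ and simply asserts that the proof of the $U^{m,n}_{\psi,\overrightarrow{s}}$ theorem carries over: your Minkowski--H\"older chain with the normalisation inequality between $\omega(S_d)/(d+\alpha)$ and the $\omega_k(S_d)/(d+\alpha_k)$, the substitution $y=s_k(t)x$, and the test functions $f_k(x)=|x|^{(d+\alpha_k)\lambda_k}$ are exactly that argument with $[0,1]^n$ replaced by $\mathbb R_+^n$. Your closing remark ruling out $\mathcal B_\star=+\infty$ (directly, or by truncation and monotone convergence) is a sensible refinement of a point the paper leaves implicit.
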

\section{Commutators of weighted multilinear Hardy-Ces\`{a}ro operator}
We use some analogous tools to study a second set of problems related now to multilinear versions of the commutators of Coifman, Rochberg and Weiss \cite{coifman}
In this section, we consider the sharp estimates of the multilinear commutator generated by $U^{m,n}_{\psi,\overrightarrow{s}}$ with symbols in $CMO^q(\mathbb R^d)$. 
\begin{definition}\label{sec4def3}Let $m,n\in\mathbb N$, $\psi:[0,1]^n\to[0,\infty)$, $s_1,\ldots,s_m:[0,1]^n\to\mathbb R$, $b_1,\ldots,b_m$ be locally integrable functions on $\mathbb R^d$ and $f_1,\ldots,f_m:\mathbb R^d\to\mathbb C$ be measurable functions. The commutator of weighted multilinear Hardy-Ces\`{a}ro operator $U^{m,n}_{\psi,\overrightarrow{s}}$ is defined as
\begin{equation}\label{sec4eq1}
U^{m,n,\overrightarrow{b}}_{\psi,s}\left(f_1,\ldots,f_m\right)(x):=\int_{[0,1]^n}\left(\prod\limits_{k=1}^mf_k\left(s_k(t)x\right)\right)\left(\prod_{k=1}^m\left(b_k(x)-b_k\left(s_k(t)x\right)\right)\right)\psi(t)dt.
\end{equation}
\end{definition}
In what follows, we set 
\begin{equation}\label{sec4eq2}
{\mathcal C}=\int_{[0,1]^n}\left(\prod_{k=1}^m|s_k(t)|^{(d+\alpha_k)\lambda_k}\right) \psi(t)dt.
\end{equation}
\begin{equation}\label{sec4eq3}
{\mathcal D}=\int_{[0,1]^n}\left(\prod_{k=1}^m|s_k(t)|^{(d+\alpha_k)\lambda_k}\right)\left(\prod_{k=1}^m\left|\log|s_k(t)|\right|\right)\psi(t)dt.
\end{equation}
\begin{theorem}\label{sec4theo1} Let $1<p<p_k<\infty$, $1<q_k<\infty$, $-\frac1{p_k}<\lambda_k<0$, $k=1,\ldots,m$ such that
\[
\frac1p=\frac1{p_1}+\cdots+\frac1{p_m}+\frac1{q_1}+\cdots+\frac1{q_m}
\]
and $\lambda=\lambda_1+\cdots+\lambda_m$. 
\begin{enumerate}
\item[{(i)}] If both $\mathcal C$ and $\mathcal D$ are finite then for any $b=(b_1,\ldots,b_m)\in CMO^{q_1}_{\omega_1}\times CMO^{q_m}_{\omega_m}$ then $U^{m,n,\overrightarrow{b}}_{\psi,s}$ is bounded from $\dot{B}^{p_1,\lambda_1}_{\omega_1}(\mathbb R^d)\times\cdots\times \dot{B}^{p_m,\lambda_m}_{\omega_m}(\mathbb R^d)$ to $\dot{B}^{p,\lambda}_\omega(\mathbb R^d)$.
\item[{(ii)}]   If for any $b=(b_1,\ldots,b_m)\in CMO^{q_1}_{\omega_1}\times\cdots\times CMO^{q_m}_{\omega_m}$, $U^{m,n,\overrightarrow{b}}_{\psi,s}$ is bounded from $\dot{B}^{p_1,\lambda_1}_{\omega_1}(\mathbb R^d)\times\cdots\times \dot{B}^{p_m,\lambda_m}_{\omega_m}(\mathbb R^d)$ to $\dot{B}^{p,\lambda}_\omega(\mathbb R^d)$, then $\mathcal D$ is finite.
\end{enumerate}
\end{theorem}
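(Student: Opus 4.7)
My plan is to treat the two implications separately, mirroring the strategy of Theorems~\ref{sec3theo1} and~\ref{sec3theo3}.

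For the sufficient direction (i), I would fix a ball $B = B(0,R)$ and use the three-term decomposition
\begin{equation*}
b_k(x) - b_k(s_k(t)x) = [b_k(x) - b_{k,B,\omega_k}] - [b_k(s_k(t)x) - b_{k,s_k(t)B,\omega_k}] + [b_{k,s_k(t)B,\omega_k} - b_{k,B,\omega_k}]
\end{equation*}
for each $k$. Multiplying over $k = 1,\ldots,m$ expands the product $\prod_k [b_k(x) - b_k(s_k(t)x)]$ into $3^m$ summands. I would then apply Minkowski's integral inequality to pull the $[0,1]^n$-integration outside the $L^p_\omega(B)$-norm, followed by the generalized H\"older inequality adapted to $1/p = \sum 1/p_k + \sum 1/q_k$, which splits each summand into $f$-factors and $b$-factors. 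The $f$-factors, after the change of variable $y = s_k(t)x$, yield exactly the power $|s_k(t)|^{(d+\alpha_k)\lambda_k}$ by the same computation used for Theorem~\ref{sec3theo3}. The oscillation-type $b$-factors are bounded by $\|b_k\|_{CMO^{q_k}_{\omega_k}}$, while the constant-type factor is handled by the standard logarithmic estimate
\begin{equation*}
|b_{k,s_k(t)B,\omega_k} - b_{k,B,\omega_k}| \lesssim \|b_k\|_{CMO^{q_k}_{\omega_k}} \, |\log|s_k(t)||,
\end{equation*}
whose proof for homogeneous weights rests on the scaling identity $\omega_k(s_k(t)B) = |s_k(t)|^{d+\alpha_k}\omega_k(B)$ together with a telescoping argument over dyadic intermediate balls. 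Each of the $3^m$ summands then contributes an integral of the form $\int_{[0,1]^n}\prod_k |s_k(t)|^{(d+\alpha_k)\lambda_k}\prod_{k \in S}|\log|s_k(t)||\,\psi(t)\,dt$ for some $S \subseteq \{1,\ldots,m\}$; the extreme cases $S = \emptyset$ and $S = \{1,\ldots,m\}$ give precisely $\mathcal{C}$ and $\mathcal{D}$, while the intermediate cases are bounded by geometric means of $\mathcal{C}$ and $\mathcal{D}$ via H\"older's inequality in $t$.

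For the necessary direction (ii), I would use the extremizers $f_k(x) = |x|^{(d+\alpha_k)\lambda_k}$ already known from Theorem~\ref{sec3theo3} to lie in $\dot{B}^{p_k,\lambda_k}_{\omega_k}$, together with the test symbols $b_k(x) = \log|x|$, whose membership in $CMO^{q_k}_{\omega_k}(\mathbb{R}^d)$ follows from the homogeneity of $\omega_k$ combined with the classical fact that $\log|\cdot|\in BMO(\mathbb{R}^d)$. Since $\log|x| - \log|s_k(t)x| = -\log|s_k(t)|$, assuming $|s_k(t)| \leq 1$ so that $-\log|s_k(t)| = |\log|s_k(t)||$, a direct computation gives
\begin{equation*}
U^{m,n,\overrightarrow{b}}_{\psi,s}(f_1,\ldots,f_m)(x) = (-1)^m |x|^{\sum_k(d+\alpha_k)\lambda_k}\int_{[0,1]^n}\prod_{k=1}^m|s_k(t)|^{(d+\alpha_k)\lambda_k}\prod_{k=1}^m|\log|s_k(t)||\,\psi(t)\,dt,
\end{equation*}
and the $t$-integral is exactly $\mathcal{D}$. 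Computing the $\dot{B}^{p,\lambda}_\omega$-norm of the power of $|x|$ on the left (parallel to the extremizer calculation in Theorem~\ref{sec3theo3}(ii)) and comparing with $\prod_k\|f_k\|_{\dot{B}^{p_k,\lambda_k}_{\omega_k}}$, the assumed boundedness of $U^{m,n,\overrightarrow{b}}_{\psi,s}$ forces $\mathcal{D}$ to be finite.

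The main obstacle I anticipate is the combinatorial bookkeeping in part~(i): showing that each intermediate summand (with $1 \leq |S| \leq m-1$ logarithmic factors) is indeed controlled by $\mathcal{C}$ and $\mathcal{D}$ requires a careful application of H\"older's inequality in the variable $t$ with exponents $m/|S|$ and $m/(m-|S|)$, possibly after splitting the domain of integration according to whether $|s_k(t)|$ is bounded away from $0$ or from $1$. A secondary but routine subtlety is verifying both the logarithmic CMO estimate and the membership $\log|x| \in CMO^{q_k}_{\omega_k}$ against weights in $\mathcal{W}_{\alpha_k}$; both reduce to explicit homogeneity computations.
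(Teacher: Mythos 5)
Your proposal follows essentially the same route as the paper's proof: the same three-term splitting of each $b_k(x)-b_k(s_k(t)x)$ through the weighted averages over $B$ and $s_k(t)B$ (the paper writes out the resulting expansion explicitly for $m=2$ as six grouped terms $I_1,\dots,I_6$), Minkowski plus the generalized H\"older inequality matched to $\frac1p=\sum\frac1{p_k}+\sum\frac1{q_k}$, the dyadic telescoping bound $\left|b_{k,\omega_k,B}-b_{k,\omega_k,s_k(t)B}\right|\lesssim \|b_k\|_{CMO^{q_k}_{\omega_k}}\log\frac{4}{|s_k(t)|}$ obtained by decomposing $[0,1]^n$ according to $2^{-\ell_k-1}<|s_k(t)|\le 2^{-\ell_k}$, and, for part (ii), the same extremizers $f_k(x)=|x|^{(d+\alpha_k)\lambda_k}$ paired with $b_k(x)=\log|x|$. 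The only substantive divergence is that you explicitly isolate the intermediate summands carrying $1\le |S|\le m-1$ logarithmic factors: note that your proposed H\"older-in-$t$ step with exponents $m/|S|$ and $m/(m-|S|)$ does not yield $\mathcal{C}$ and $\mathcal{D}$ as claimed (it produces $\int\prod_k|s_k(t)|^{(d+\alpha_k)\lambda_k}\prod_{k\in S}\left|\log|s_k(t)|\right|^{m/|S|}\psi(t)\,dt$, which is not $\mathcal{D}$), so this point would need a different justification; the paper itself leaves these cross terms absorbed in the unexpanded product $\prod_k\log\frac{4}{|s_k(t)|}$ without comment, so on this point you are no worse off than the source, but you should not present the H\"older step as if it closed the gap.
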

We note here that $\mathcal D$ is finite is not enough to imply $\mathcal C$ is finite (see \cite{fu1,fu2}). But it is easy to see that, if we assume in addition that  for each $k=1,\ldots,m$ such that $|s_k(t)|\geq c>1$ for all $t\in[0,1]^n$ or $|s_k(t)|\leq c<1$ for all $t\in[0,1]^n$, then $\mathcal C$ is finite if and only if $\mathcal D$ is finite. Thus, Theorem \ref{sec4theo1}  implies immediately that
\begin{corollary}\label{sec4theo2} Let $1<p<p_k<\infty$, $1<q_k<\infty$, $-\frac1{p_k}<\lambda_k<0$, $k=1,\ldots,m$ such that
\[
\frac1p=\frac1{p_1}+\cdots+\frac1{p_m}+\frac1{q_1}+\cdots+\frac1{q_m}
\]
and $\lambda=\lambda_1+\cdots+\lambda_m$. Furthermore, suppose that for each $k=1,\ldots,m$ then $|s_k(t)|\geq c>1$ for all $t\in[0,1]^n$ or $|s_k(t)|\leq c<1$ for all $t\in[0,1]^n$, for each $k=1,\ldots,m$. Then,  $U^{m,n,\overrightarrow{b}}_{\psi,s}$ is bounded from $\dot{B}^{p_1,\lambda_1}_{\omega_1}(\mathbb R^d)\times\cdots\times \dot{B}^{p_m,\lambda_m}_{\omega_m}(\mathbb R^d)$ to $\dot{B}^{p,\lambda}_\omega(\mathbb R^d)$ if and only if $\mathcal C$ is finite.
\end{corollary}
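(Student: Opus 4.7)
The plan is to derive Corollary~\ref{sec4theo2} directly from Theorem~\ref{sec4theo1} via the observation that, under the additional hypothesis on the $s_k$, the two quantities $\mathcal C$ and $\mathcal D$ of \eqref{sec4eq2}--\eqref{sec4eq3} are simultaneously finite. Once this equivalence is in hand, the sufficient direction of the corollary is provided by Theorem~\ref{sec4theo1}(i) applied with $\mathcal C,\mathcal D<\infty$, and the necessary direction by Theorem~\ref{sec4theo1}(ii) combined with the implication $\mathcal D<\infty\Rightarrow\mathcal C<\infty$.

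The easier half of this equivalence is $\mathcal D<\infty\Rightarrow\mathcal C<\infty$. Under the hypothesis, for each $k$ the quantity $|\log|s_k(t)||$ is bounded below pointwise by the positive constant $\eta_k:=\min\{\log c,\,|\log c|\}$, so multiplying these lower bounds yields $\prod_{k=1}^{m}|\log|s_k(t)||\ge\prod_{k=1}^{m}\eta_k$. Inserting this into the integrand of $\mathcal D$ gives $\mathcal D\ge\bigl(\prod_{k}\eta_k\bigr)\mathcal C$, so $\mathcal D<\infty$ forces $\mathcal C<\infty$.

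For the converse half $\mathcal C<\infty\Rightarrow\mathcal D<\infty$, I would argue factor by factor using the elementary inequalities $\log u\le\epsilon^{-1}u^{\epsilon}$ for $u\ge 1$ (applicable in Case~A, $|s_k|\ge c>1$) and $|\log u|\le\epsilon^{-1}u^{-\epsilon}$ for $0<u\le 1$ (applicable in Case~B, $|s_k|\le c<1$). By choosing each $\epsilon_k>0$ sufficiently small relative to $|(d+\alpha_k)\lambda_k|$, one can combine these with the one-sided bound $|s_k|\gtrless 1$ to obtain a pointwise comparison of the form
\[
|s_k(t)|^{(d+\alpha_k)\lambda_k}\,\bigl|\log|s_k(t)|\bigr|\le C_k\,|s_k(t)|^{(d+\alpha_k)\lambda_k},
\]
and multiplying these over $k=1,\dots,m$ and integrating against $\psi$ yields $\mathcal D\le\bigl(\prod_{k}C_k\bigr)\mathcal C$.

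With the equivalence $\mathcal C<\infty\iff\mathcal D<\infty$ established, the corollary follows at once: if $\mathcal C<\infty$ then $\mathcal D<\infty$ as well, so Theorem~\ref{sec4theo1}(i) gives the boundedness of $U^{m,n,\overrightarrow{b}}_{\psi,s}$; conversely, boundedness forces $\mathcal D<\infty$ by Theorem~\ref{sec4theo1}(ii), and hence $\mathcal C<\infty$. The main difficulty of the proof lies in the converse half of the equivalence: one must keep track of the sign of the shifted exponents $(d+\alpha_k)\lambda_k\pm\epsilon_k$ and exploit the one-sided control $|s_k|\gtrless 1$ to absorb the extra factor $|s_k|^{\pm\epsilon_k}$ into a uniform constant, which is precisely the point at which the hypothesis that each $|s_k(t)|$ lies entirely on one side of $1$ is used.
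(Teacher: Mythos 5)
Your overall route is the same as the paper's: the paper likewise disposes of this corollary by asserting that, under the one-sided hypothesis on the $s_k$, the quantities ${\mathcal C}$ and ${\mathcal D}$ of (\ref{sec4eq2})--(\ref{sec4eq3}) are simultaneously finite, and then quoting Theorem \ref{sec4theo1}. Your half ${\mathcal D}<\infty\Rightarrow{\mathcal C}<\infty$ is correct, since in either case one has the pointwise lower bound $\left|\log|s_k(t)|\right|\geq|\log c|>0$ (note only that your formula $\eta_k=\min\{\log c,|\log c|\}$ produces a negative number when $c<1$; the constant you want is simply $|\log c|$).

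The other half is where your argument genuinely fails. The pointwise comparison you claim,
\[
|s_k(t)|^{(d+\alpha_k)\lambda_k}\bigl|\log|s_k(t)|\bigr|\leq C_k\,|s_k(t)|^{(d+\alpha_k)\lambda_k},
\]
after dividing by the strictly positive common factor, is exactly the statement that $\left|\log|s_k(t)|\right|\leq C_k$, i.e.\ that $|s_k|$ is bounded away from both $0$ and $\infty$; the hypothesis of the corollary only controls one side. The $\epsilon$-trick does not repair this: writing $\gamma_k=(d+\alpha_k)\lambda_k<0$, in Case A it gives $|s_k|^{\gamma_k}\log|s_k|\leq\epsilon^{-1}|s_k|^{\gamma_k+\epsilon}=\epsilon^{-1}|s_k|^{\gamma_k}\cdot|s_k|^{\epsilon}$, and the factor $|s_k|^{\epsilon}$ is unbounded above precisely because no upper bound on $|s_k|$ is assumed; in Case B one is left with the factor $|s_k|^{-\epsilon}$, which is unbounded as $|s_k|\to0$. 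In fact the implication ${\mathcal C}<\infty\Rightarrow{\mathcal D}<\infty$ is false under the stated hypothesis: take $m=n=1$, $s_1(t)=e^{-1/t}\leq e^{-1}<1$ and $\psi(t)=e^{-|\gamma_1|/t}\,t^{-1/2}$; then ${\mathcal C}=\int_0^1t^{-1/2}\,dt<\infty$ while ${\mathcal D}=\int_0^1t^{-3/2}\,dt=\infty$. So the gap is not merely a defect of your write-up: the equivalence the paper declares ``easy to see'' (and hence the ``if'' direction of the corollary) actually requires $\log|s_k|$ to be bounded, i.e.\ $|s_k|$ bounded above in Case A and bounded below away from $0$ in Case B. Your proof would need either to add that hypothesis or to flag that the statement cannot be established as it stands.
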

Now we will give the proof for Theorem \ref{sec4theo1}.
\begin{proof}First we assume that $\mathcal C_m$ is finite. We shall give a details analysis for the case $m=2$ and by similarity, the general case is proved in the same way. We denote $B=B(0,R)$ for short. Let $\overrightarrow{b}=(b_1,b_2)\in CMO^{q_1}_{\omega_1}(\mathbb R^d)\times CMO^{q_2}_{\omega_2}(\mathbb R^d)$. By Minkowski's inequality, we have
\[
\left(\frac1{\omega\left(B\right)}\int_{B}\left|U^{2,n,b}_{\psi,s}\left(f_1,f_2\right)(x)\right|^p\omega(x)dx\right)^{1/p}
\]
\[
\leq\left(\frac1{\omega\left(B\right)}\int_{B}\left(\int_{[0,1]^n}\left(\prod\limits_{k=1}^2\left|f_k\left(s_k(t)x\right)\right|\right)\left(\prod_{k=1}^2\left|b_k(x)-b_k\left(s_k(t)x\right)\right|\right)\psi(t)dt\right)^p\omega(x)dx\right)^{1/p}
\]
\[
\leq\int_{[0,1]^n} \left(\frac1{\omega\left(B\right)}\int_{B}\left(\prod\limits_{k=1}^2\left|f_k\left(s_k(t)x\right)\right|\cdot\prod_{k=1}^2\left|b_k(x)-b_k\left(s_k(t)x\right)\right|\omega(x)\right)^pdx \right)^{1/p}\psi(t)dt
\]
\[
=:I.
\]
For any $ x_i,y_i,z_i,t_i\in\mathbb C$ with $i=1,2$, we have the following elementary inequality
\[
\prod_{i=1}^2\left|\left(x_i-y_i\right)\right|\leq\prod_{i=1}^2 \left|\left(x_i-z_i\right)\right|+\prod_{i=1}^2 \left|\left(y_i-t_i\right)\right|+\prod_{i=1}^2 \left|\left(z_i-t_i\right)\right|
\]
\[
+\left(\left|(x_1-z_1)(z_2-t_2)\right|+\left|(x_2-z_2)(z_1-t_1)\right|\right) +\left(\left|(x_1-z_1)(y_2-t_2)\right|+\left|(x_2-z_2)(y_1-t_1)\right|\right)
\]
\[
+\left(\left|(z_1-t_1)(y_2-t_2)\right|+\left|(z_2-t_2)(y_1-t_1)\right|\right). 
\]
 It is convenient to denote by $b_{i,\omega,B}$ the integrals $\int_{B}\frac1{\omega(B)}b_i(x)\omega(x)dx$ for $i=1,2$. Now applying the inequality with $x_i=b_i(x)$, $y_i=b_i(s_i(t)x)$, $z_i=b_{i,B}$, $t_i=b_{i,s_i(t)B}$ and using Minkowski's inequality, we get that 
\[
I\leq I_1+I_2+I_3+I_4+I_5+I_6,
\]
where, if we set $\overline{f}(x)=\prod\limits_{k=1}^2\left|f_k\left(s_k(t)x\right)\right|$, then
\[
I_1=\int_{[0,1]^n} \left(\frac1{\omega\left(B\right)}\int_{B}\left(\overline{f}(x)\cdot\prod_{k=1}^2\left|b_k(x)-b_{k,\omega_k,B}\right|\right)^p\omega(x)dx \right)^{1/p}\psi(t)dt,
\]
\[
I_2=\int_{[0,1]^n} \left(\frac1{\omega\left(B\right)}\int_{B}\left(\overline{f}(x)\cdot\prod_{k=1}^2\left|b_k(s_k(t)x)-b_{k,\omega_k,s_k(t)B}\right|\right)^p\omega(x)dx \right)^{1/p}\psi(t)dt,
\]
\[
I_3=\int_{[0,1]^n} \left(\frac1{\omega\left(B\right)}\int_{B}\left(\overline{f}(x)\cdot\prod_{k=1}^2\left|b_{k,\omega_k,B}-b_{k,\omega_k,s_k(t)B}\right|\right)^p\omega(x)dx \right)^{1/p}\psi(t)dt,
\]
\[
I_4=\int_{[0,1]^n} \left(\frac1{\omega\left(B\right)}\int_{B}\left(\overline{f}(x)\cdot \sum\limits_{i\neq j \atop i,j=1,2}\left|\left(b_i(x)-b_{i,B}\right)\left(b_{j,B}-b_{j,\omega_j,s_j(t)B}\right)\right| \right)^p\omega(x)dx \right)^{1/p}\psi(t)dt,
\]
\[
I_5=\int_{[0,1]^n} \left(\frac1{\omega\left(B\right)}\int_{B}\left(\overline{f}(x)\cdot \sum\limits_{i\neq j \atop i,j=1,2}\left|\left(b_i(x)-b_{i,B}\right)\left(b_j(s_j(t)x)-b_{j,\omega_j,s_j(t)B}\right)\right|\right)^p\omega(x)dx \right)^{1/p}\psi(t)dt,
\]
\[
I_6=\int_{[0,1]^n} \left(\frac1{\omega\left(B\right)}\int_{B}\left(\overline{f}(x)\cdot \sum\limits_{i\neq j \atop i,j=1,2}\left|\left(b_{i,B}-b_{i,s_i(t)B}\right)\left(b_j(s_j(t)x)-b_{j,\omega_j,s_j(t)B}\right)\right| \right)^p\omega(x)dx \right)^{1/p}\psi(t)dt.
\]
Choose now $p<s_1<\infty$ and $p<s_2<\infty$ such that $\frac1{s_1}=\frac1{p_1}+\frac1{q_1}$ and $\frac1{s_2}=\frac1{p_2}+\frac1{q_2}$. Notice that $\frac1{s_1}+\frac1{s_2}=\frac1p$. Then by H\"{o}lder's inequality, we have that
\begin{align*}
I_1 \leq& \int_{[0,1]^n} \prod\limits_{k=1}^2 \left(\frac1{\omega_k\left(B\right)}\int_{B}\left|f_k\left(s_k(t)x\right)\right|^{p_k}\omega_k(x)dx\right)^{1/p_k}\times\\ 
&\qquad\qquad\times \prod_{k=1}^2\left(\frac1{\omega_k\left(B\right)}\int_B\left|b_k(x)-b_{k,\omega_k,B}\right|^{q_k}\omega_k(x)\right)^{1/q_k}\psi(t)dt\\
 \leq& \int_{[0,1]^n}\prod_{k=1}^2|s_k(t)|^{(d+\alpha_k)\lambda_k}\omega_k(B)^{\lambda_k} \prod\limits_{k=1}^2 \left(\frac1{\omega_k\left(s_k(t)B\right)^{1+\lambda_k p_k}}\int_{s_k(t)B}\left|f_k\left(y\right)\right|^{p_k}\omega_k(y)dy\right)^{1/p_k}\times\\
 &\qquad\qquad\times\prod_{k=1}^2\left(\frac1{\omega_k\left(B\right)}\int_B\left|b_k(x)-b_{k,\omega_k,B}\right|^{q_k}\omega_k(x)\right)^{1/q_k}\psi(t)dt\\
 &\leq \prod_{k=1}^2\omega_k(B)^{\lambda_k}\prod_{k=1}^2\|b_k\|_{CMO^{q_k}_{\omega_k}}\prod_{k=1}^2\|f_k\|_{\dot{B}^{p_k,\lambda_k}}\times\int_{[0,1]^n}\left(\prod_{k=1}^2|s_k(t)|^{(d+\alpha_k)\lambda_k}\right)\psi(t)dt.
\end{align*}
Similarly to the estimate of $I_1$, we have that
\begin{align*}
I_2 \leq&  \int_{[0,1]^n} \prod\limits_{k=1}^2\left(\frac1{\omega_k\left(B\right)}\int_{B}\left|f_k\left(s_k(t)x\right)\right|^{p_k}\omega_k(x)dx \right)^{1/p_k}\times\\
&\qquad\qquad\times\prod_{k=1}^2\left(\frac1{\omega_k\left(B\right)}\int_{B}\left|b_k(s_k(t)x)-b_{k,\omega_k,s_k(t)B}\right|^{q_k}\omega(x)dx\right)^{1/q_k}\psi(t)dt\\
\end{align*}
\begin{align*}
\quad\leq&  \int_{[0,1]^n}\prod_{k=1}^2|s_k(t)|^{(d+\alpha_k)\lambda_k}\omega_k(B)^{\lambda_k}  \prod\limits_{k=1}^2\left(\frac1{\omega_k\left(B\right)^{1+\lambda_kp_k}}\int_{s_k(t)B}\left|f_k\left(y\right)\right|^{p_k}\omega_k(y)dy \right)^{1/p_k}\times\\
&\qquad\qquad\times\prod_{k=1}^2\left(\frac1{\omega_k\left(s_k(t)B\right)}\int_{s_k(t)B}\left|b_k(y)-b_{k,\omega_k,s_k(t)B}\right|^{q_k}\omega(y)dy\right)^{1/q_k}\psi(t)dt\\
&\leq \prod_{k=1}^2\omega_k(B)^{\lambda_k}\prod_{k=1}^2\|b_k\|_{CMO^{q_k}_{\omega_k}}\prod_{k=1}^2\|f_k\|_{\dot{B}^{p_k,\lambda_k}}\times\int_{[0,1]^n}\left(\prod_{k=1}^2|s_k(t)|^{(d+\alpha_k)\lambda_k}\right)\psi(t)dt.
\end{align*}
Now we give the estimate for $I_3$, applying H\"{o}lder's inequality, we get that
\begin{align*}
I_3=& \int_{[0,1]^n} \left(\frac1{\omega\left(B\right)}\int_{B}\left(\prod\limits_{k=1}^2\left|f_k\left(s_k(t)x\right)\right|\omega(x)\right)^pdx \right)^{1/p}\prod_{k=1}^2\left|b_{k,\omega_k,B}-b_{k,\omega_k,s_k(t)B}\right|\psi(t)dt  \\
\leq& \int_{[0,1]^n} \prod\limits_{k=1}^2\left(\frac1{\omega_k\left(B\right)}\int_{B}\left|f_k\left(s_k(t)x\right)\right|^{s_k}\omega_k(x)dx\right)^{1/s_k}\prod_{k=1}^2\left|b_{k,\omega_k,B}-b_{k,\omega_k,s_k(t)B}\right|\psi(t)dt  \\
\leq& \int_{[0,1]^n} \prod\limits_{k=1}^2\left(\frac1{\omega_k\left(s_k(t)B\right)^{1+p_k\lambda_k}}\int_{s_k(t)B}\left|f_k\left(y\right)\right|^{p_k}\omega_k(y)dy\right)^{1/p_k}\prod_{k=1}^2|s_k(t)|^{(d+\alpha_k)\lambda_k}\omega_k(B)^{\lambda_k}\times\\
&\qquad\qquad\times\prod_{k=1}^2\left|b_{k,\omega_k,B}-b_{k,\omega_k,s_k(t)B}\right|\psi(t)dt  \\
\leq& \prod\limits_{k=1}^2\omega_k(B)^{\lambda_k} \prod\limits_{k=1}^2\|f_k\|_{\dot{B}^{p_k,\lambda_k}}\int_{[0,1]^n} \prod_{k=1}^2|s_k(t)|^{(d+\alpha_k)\lambda_k} \prod_{k=1}^2\left|b_{k,\omega_k,B}-b_{k,\omega_k,s_k(t)B}\right|\psi(t)dt
\end{align*}
Notice that $[0,1]^n$ is the union of pairwise disjoint subsets $S_{\ell_1,\ldots,\ell_m}$, where
\[
S_{\ell_1,\ldots,\ell_m}=\bigcap\limits_{i=1}^m\left\{t\in[0,1]^n:\;2^{-\ell_i-1}<|s_i(t)|\leq 2^{-\ell_i}\right\}.
\]
Thus we obtain that
\begin{align*}
I_3\leq & \prod\limits_{k=1}^2\omega_k(B)^{\lambda_k} \prod\limits_{k=1}^2\|f_k\|_{\dot{B}^{p_k,\lambda_k}}\sum\limits_{\ell_1,\ell_2=0}^{\infty}\;\int_{S_{\ell_1,\ell_2}} \prod_{k=1}^2|s_k(t)|^{(d+\alpha_k)\lambda_k}\,\psi(t)\times\\
&\times \prod_{k=1}^2\left(\sum\limits_{j=0}^{\ell_k}\left|b_{k,\omega_k,2^{-j-1}B}-b_{k,\omega_k,2^{-j}B}\right|+\left|b_{k,\omega_k,2^{-\ell_k-1}B}-b_{k,\omega_k,s_k(t)B}\right|\right)dt\\
\leq&\; C\prod\limits_{k=1}^2\omega_k(B)^{\lambda_k} \prod\limits_{k=1}^2\|f_k\|_{\dot{B}^{p_k,\lambda_k}}\sum\limits_{\ell_1,\ell_2=0}^{\infty}\;\int_{S_{\ell_1,\ell_2}} \prod_{k=1}^2|s_k(t)|^{(d+\alpha_k)\lambda_k}\times\\
&\times \prod_{k=1}^2\left(\ell_k+2\right)\|b_k\|_{C\dot{M}O^{q_k}_{\omega_k}}\times\psi(t)dt\\
\leq&C\prod\limits_{k=1}^2\omega_k(B)^{\lambda_k} \prod\limits_{k=1}^2\|f_k\|_{\dot{B}^{p_k,\lambda_k}}\prod_{k=1}^2\|b_k\|_{C\dot{M}O^{q_k}_{\omega_k}} \int_{[0,1]^n} \prod_{k=1}^2\left(|s_k(t)|^{(d+\alpha_k)\lambda_k}\log\frac4{|s_k(t)|}\right)\psi(t)dt.
\end{align*}
Now we give the estimate for $I_4$. Similarly, we choose $1<s<\infty$ such that $\frac1s=\frac1{q_1}+\frac1{q_2}$. Let
\[
\overline{b_{i,j}}(x):=\left|\left(b_i(x)-b_{i,B}\right)\left(b_{j,B}-b_{j,\omega_j,s_j(t)B}\right)\right|,
\]
then Minkowski's inequality and H\"{o}lder’s inequality imply that

\begin{align*}
I_4= &  \int_{[0,1]^n} \left(\frac1{\omega\left(B\right)}\int_{B}\left(\left(\prod\limits_{k=1}^2\left|f_k\left(s_k(t)x\right)\right|\right)\left(\sum\limits_{i\neq j \atop i,j=1,2}\overline{b_{i,j}}(x)\right)\right)^p\omega(x)dx \right)^{1/p}\psi(t)dt\\
\leq & \int_{[0,1]^n} \sum\limits_{i\neq j \atop i,j=1,2} \left(\frac1{\omega\left(B\right)}\int_{B}\left(\left(\prod\limits_{k=1}^2\left|f_k\left(s_k(t)x\right)\right|\right)\cdot  \overline{b_{i,j}}(x)\right)^p\omega(x)dx \right)^{1/p}\psi(t)dt\\
\leq& \int_{[0,1]^n}\prod\limits_{k=1}^2\left(\frac1{\omega_k(B)}\int_{B}\left|f_k\left(s_k(t)x\right)\right|^{p_k}\omega_k(x)dx\right)^{1/p_k}\\
&\times\left( \sum\limits_{i\neq j \atop i,j=1,2}\left(\frac1{\omega_i\left(B\right)}\int_B\left|b_i(x)-b_{i,B}\right|^s\omega_i(x)dx\right)^{1/s}\left|b_{j,B}-b_{j,\omega_j,s_j(t)B}\right|\right)\\
 \leq&C \prod_{k=1}^2\omega_k\left(B\right)^{\lambda_k} \int_{[0,1]^n} \prod_{k=1}^2 |s_k(t)|^{(d+\alpha_k)\lambda_k}\prod\limits_{k=1}^2\left(\frac1{\omega_k(B)^{1+\lambda_k p_k}}\int_{s_k(t)B}\left|f_k\left(x\right)\right|^{p_k}\omega_k(x)dx\right)^{1/p_k}\\
&\times\left( \sum\limits_{i\neq j \atop i,j=1,2}\left(\frac1{\omega_i\left(B\right)}\int_B\left|b_i(x)-b_{i,B}\right|^s\omega_i(x)dx\right)^{1/s}\left|b_{j,B}-b_{j,\omega_j,s_j(t)B}\right|\right)\psi(t)dt\\
\leq& C\prod_{k=1}^2\omega_k\left(B\right)^{\lambda_k} \prod\limits_{k=1}^2\|f_k\|_{\dot{B}^{p_k,\lambda_k}}\int_{[0,1]^n} \prod_{k=1}^2 |s_k(t)|^{(d+\alpha_k)\lambda_k} \psi(t)\\
&\times\left( \sum\limits_{i\neq j \atop i,j=1,2}\left(\frac1{\omega_i\left(B\right)}\int_B\left|b_i(x)-b_{i,B}\right|^s\omega_i(x)dx\right)^{1/s}\left|b_{j,B}-b_{j,\omega_j,s_j(t)B}\right|\right)dt\\
\end{align*}
From the estimates of $I_1$ and $I_3$, we deduce that
\begin{align*}
I_4\leq&C\prod\limits_{k=1}^2\omega_k(B)^{\lambda_k} \prod\limits_{k=1}^2\|f_k\|_{\dot{B}^{p_k,\lambda_k}}\prod_{k=1}^2\|b_k\|_{C\dot{M}O^{q_k}_{\omega_k}} \\
&\times\int_{[0,1]^n} \prod_{k=1}^2|s_k(t)|^{(d+\alpha_k)\lambda_k}\left(1+\sum\limits_{k=1}^2\log\frac2{|s_k(t)|}\right)\psi(t)dt.
\end{align*}
It can be deduced from the estimates of $I_1,I_2,I_3,I_4$ that 
\begin{align*}
I_5\leq &C\prod\limits_{k=1}^2\omega_k(B)^{\lambda_k} \prod\limits_{k=1}^2\|f_k\|_{\dot{B}^{p_k,\lambda_k}}\prod_{k=1}^2\|b_k\|_{C\dot{M}O^{q_k}_{\omega_k}} \int_{[0,1]^n} \prod_{k=1}^2|s_k(t)|^{(d+\alpha_k)\lambda_k} \psi(t)dt.\\
I_6\leq& C\prod\limits_{k=1}^2\omega_k(B)^{\lambda_k} \prod\limits_{k=1}^2\|f_k\|_{\dot{B}^{p_k,\lambda_k}}\prod_{k=1}^2\|b_k\|_{C\dot{M}O^{q_k}_{\omega_k}}\\
&\times \int_{[0,1]^n} \prod_{k=1}^2|s_k(t)|^{(d+\alpha_k)\lambda_k} \left(1+\sum\limits_{k=1}^2\log\frac2{|s_k(t)|}\right)\psi(t)dt.\\
\end{align*}
Combining the estimates of $I_1,I_2,I_3,I_4,I_5$ and $I_6$ gives
\begin{align*}
&\left(\frac1{\omega\left(B\right)}\int_{B}\left|U^{2,n,b}_{\psi,s}\left(f_1,f_2\right)(x)\right|^p\omega(x)dx\right)^{1/p}\leq C\omega(B)^{\lambda} \prod\limits_{k=1}^2\|f_k\|_{\dot{B}^{p_k,\lambda_k}}\prod_{k=1}^2\|b_k\|_{C\dot{M}O^{q_k}_{\omega_k}}\\
&\qquad\times\int_{[0,1]^n} \prod_{k=1}^2\left(|s_k(t)|^{(d+\alpha_k)\lambda_k}\log\frac4{|s_k(t)|}\right)\psi(t)dt.\\
\end{align*}
This proves (i). 
\vskip12pt
Now we prove the necessity in (ii). Assume that $U^{2,n,b}_{\psi,s}$ is bounded from $\dot{B}^{p_1,\lambda_1}_{\omega_1}(\mathbb R^d)\times \dot{B}^{p_m,\lambda_m}_{\omega_m}(\mathbb R^d)$ to $\dot{B}^{p,\lambda}_\omega(\mathbb R^d)$. Firstly, we need the following lemmas.
\begin{lemma}\label{sec4lem1}
If $\omega\in\mathcal W_\alpha$ and has doubling property, then $\log|x|\in BMO(\omega)$.
\end{lemma}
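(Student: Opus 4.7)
The plan is to show directly that for every ball $B=B(x_0,r)\subset\mathbb R^d$ one can find a constant $c_B$ (depending on $B$) such that $\frac{1}{\omega(B)}\int_B |\log|x|-c_B|\,\omega(x)\,dx$ is uniformly bounded. Since for any $c$,
\begin{equation*}
\frac{1}{\omega(B)}\int_B |f(x)-f_{B,\omega}|\,\omega(x)\,dx\le 2\cdot\frac{1}{\omega(B)}\int_B |f(x)-c|\,\omega(x)\,dx,
\end{equation*}
this reduction is standard and implies $\log|x|\in BMO(\omega)$.

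First I would treat centered balls $B(0,R)$ by scaling. Using $\omega(ty)=|t|^\alpha\omega(y)$ and the change of variables $x=Ry$, one obtains
\begin{equation*}
\frac{1}{\omega(B(0,R))}\int_{B(0,R)}\bigl|\log|x|-\log R\bigr|\,\omega(x)\,dx
= \frac{1}{\omega(B(0,1))}\int_{B(0,1)}|\log|y||\,\omega(y)\,dy,
\end{equation*}
a constant depending only on $d$, $\alpha$, and $\omega$. Passing to polar coordinates and using $\omega(r\xi)=r^\alpha\omega(\xi)$ reduces numerator and denominator to radial integrals of the form $\int_0^1 r^{d+\alpha-1}\,dr$ and $\int_0^1 r^{d+\alpha-1}|\log r|\,dr$; both are finite precisely when $\alpha>-d$, which is forced by the doubling assumption on $\omega$ (indeed, doubling requires $\omega(B(0,r))<\infty$ for all $r$, equivalently $\omega\in L^1_{\mathrm{loc}}(\mathbb R^d)$).

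For a general ball $B(x_0,r)$ I would split into two cases. In the far case $|x_0|\ge 2r$, on $B(x_0,r)$ one has $|x|/|x_0|\in[1/2,3/2]$, so $|\log|x|-\log|x_0||\le\log 2$ pointwise; taking $c_B=\log|x_0|$ bounds the mean oscillation trivially. In the near case $|x_0|<2r$ one has the inclusions $B(x_0,r)\subset B(0,3r)\subset B(x_0,5r)$; iterated application of the doubling hypothesis then gives $\omega(B(0,3r))\le C_{\mathrm{dbl}}\,\omega(B(x_0,r))$, and choosing $c_B=\log(3r)$ together with the centered estimate above yields
\begin{equation*}
\frac{1}{\omega(B(x_0,r))}\int_{B(x_0,r)}|\log|x|-\log(3r)|\,\omega(x)\,dx\le\frac{C_{\mathrm{dbl}}}{\omega(B(0,3r))}\int_{B(0,3r)}|\log|x|-\log(3r)|\,\omega(x)\,dx,
\end{equation*}
which is uniformly controlled by the first step.

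The main (rather mild) obstacle is the bookkeeping in the near case: one must verify that a bounded number of iterations of the doubling inequality suffices to pass from $B(x_0,r)$ to $B(x_0,5r)$, and that the doubling hypothesis genuinely forces $\alpha>-d$ so that the scaling step produces a finite constant. Everything else is change of variables, polar coordinates, and the elementary pointwise bound on $\log|x|-\log|x_0|$ away from the origin.
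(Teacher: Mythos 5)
Your proof is correct and follows essentially the same route as the paper's: reduce to a fixed scale using the homogeneity of $\omega$, then split into a far case ($|x_0|$ large relative to $r$, where $\left|\log|x|-\log|x_0|\right|\le\log 2$ pointwise) and a near case (where the doubling property compares $\omega$ of a centered enlargement with $\omega(B(x_0,r))$). If anything, your treatment of the centered ball via polar coordinates and $\int_0^1 r^{d+\alpha-1}|\log r|\,dr<\infty$ is more careful than the paper's, which bounds $|\log|z||$ by $\log 3$ on $|z|\le 3$ and thereby glosses over the logarithmic singularity at the origin.
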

This lemma was proved in \cite{chuong1}, for convenience of the reader, the proof to the lemma is presented here.
\begin{proof} To prove $\log|x|\in BMO(\omega)$, for any $x_0\in\mathbb R^d$ and $r>0$, we must find a constant $c_{x_0,r}$, such that $\frac1{\omega\left(B(x_0,r)\right)}\int_{|x-x_0|\leq r}\left|\log|x|-c_{x_0,r}\right|\omega(x)dx$ is uniformly bounded. Since  
\begin{align*}
&\frac1{\omega\left(B(x_0,r)\right)}\int\limits_{|x-x_0|\leq r}\left|\log|x|-c_{x_0,r}\right|\omega(x)dx\\
=&\frac{r^{\alpha+n}}{\omega\left(B(x_0,r)\right)}\int\limits_{|z-r^{-1}x_0|\leq1}\left|\log|z|+\log r-c_{x_0,r}\right|\omega(z)dz\\ 
=&\frac1{\omega\left(B(r^{-1}x_0,1)\right)}\int\limits_{|z-r^{-1}x_0|\leq1}\left|\log|z|+\log r-c_{x_0,r}\right|\omega(z)dz,
\end{align*}
we may take $c_{x_0,r}=c_{r^{-1}x_0,1}-\log r$, and so things reduce to the case that $r=1$ and $x_0$ is arbitrary. Let 
\[
A_{x_0}=\frac1{\omega\left(B(x_0,1)\right)}\int\limits_{|z-x_0|\leq1}\left|\log|z|-c_{x_0,1}\right|\omega(z)dz.
\]
If $|x_0|\leq2$, we take $c_{x_0,1}=0$, and observe that
\[
A_{x_0}\leq \frac1{\omega\left(B(x_0,1)\right)}\int\limits_{|z|\leq3}\log3\cdot\omega(z)dz= \log3\cdot\frac{\omega\left(B(0,3)\right)}{\omega\left(B(x_0,1)\right)}
\]
\[
\leq \log3\cdot \frac{\omega\left(B(x_0,6)\right)}{\omega\left(B(x_0,1)\right)}\leq C<\infty,
\]
where the last inequality comes from the assumption that $\omega$ has doubling property.
\vskip12pt
If $|x_0|\geq2$, take $c_{x_0,1}=\log |x_0|$. In this case, notice that
\[
A_{x_0}=  \frac1{\omega\left(B(x_0,1)\right)}\int\limits_{B(x_0,1)}\left|\log\frac{|z|}{|x_0|}\right|\omega(z)dz
\]
\[
\leq \frac1{\omega\left(B(x_0,1)\right)}\int\limits_{B(x_0,1)}\max\left\{\log\frac{|x_0|+1}{|x_0|}\,,\,\log\frac{|x_0|}{|x_0|-1}\right\}\cdot\omega(z)dz
\]
\[
\leq \max\left\{\log\frac{|x_0|+1}{|x_0|}\,,\,\log\frac{|x_0|}{|x_0|-1}\right\}\leq\log2.
\]
Thus $\log|x|$ belongs to $BMO_\omega\left(\mathbb R^d\right)$.
\end{proof} 
\begin{lemma}\label{sec4lem2}
Let $\omega\in\mathcal W_\alpha$, where $\alpha>-d$, $1<p<\infty$ and $-\frac1p<\lambda$. The the function $f_0(x)=|x|^{(d+\alpha)\lambda}$ belongs to $\dot{B}^{p,\lambda}_\omega$ and 
\[
\|f\|_{\dot{B}^{p,\lambda}_\omega}=\left(\omega_k(S_d)\right)^{-\lambda_k}\left(\frac1{(d+\alpha)(1+\lambda p)}\right)^{1/p}.
\]
\end{lemma}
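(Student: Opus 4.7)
The proof will be a direct computation using polar coordinates, exploiting the absolute homogeneity $\omega(tx)=|t|^\alpha\omega(x)$ of any weight in $\mathcal{W}_\alpha$. The key observation is that both $\omega(B(0,R))$ and $\int_{B(0,R)}|f_0|^p\omega(x)\,dx$ factor into a power of $R$ times the spherical average $\omega(S_d)$, and the $R$-dependence will cancel when we form the Morrey quotient. So the plan is to fix $R>0$, write $x=r\theta$ with $r=|x|$ and $\theta\in S_d$, and invoke the polar decomposition from \cite[page~78]{folland} recalled in the definition of $\mathcal{W}_\alpha$.

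First I would compute the normalizing denominator. Using $\omega(r\theta)=r^\alpha\omega(\theta)$ and polar coordinates,
\[
\omega(B(0,R))=\int_0^R r^{d-1+\alpha}\,dr\cdot\omega(S_d)=\frac{R^{d+\alpha}}{d+\alpha}\,\omega(S_d),
\]
which is finite precisely because of the standing assumption $\alpha>-d$. Then I would compute the $L^p$ integral of $f_0$ with weight $\omega$ on $B(0,R)$: with $|f_0(x)|^p=r^{(d+\alpha)\lambda p}$,
\[
\int_{B(0,R)}|f_0(x)|^p\omega(x)\,dx=\omega(S_d)\int_0^R r^{(d+\alpha)\lambda p+\alpha+d-1}\,dr=\omega(S_d)\cdot\frac{R^{(d+\alpha)(1+\lambda p)}}{(d+\alpha)(1+\lambda p)}.
\]
The hypothesis $\lambda>-\frac{1}{p}$ is exactly what makes the exponent $(d+\alpha)(1+\lambda p)-1$ greater than $-1$, so the radial integral converges at $r=0$ and the constant $(1+\lambda p)$ in the denominator is positive.

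Finally, I would divide by $\omega(B(0,R))^{1+\lambda p}$ and take the $p$-th root. The powers of $R$ cancel (both numerator and denominator carry $R^{(d+\alpha)(1+\lambda p)}$), leaving an expression independent of $R$. So the supremum in the definition of $\|\cdot\|_{\dot B^{p,\lambda}_\omega}$ is attained identically for every $R>0$, which in particular shows $f_0\in\dot B^{p,\lambda}_\omega$. Simplifying the constant by collecting powers of $\omega(S_d)$ and $(d+\alpha)$ yields the closed form
\[
\|f_0\|_{\dot B^{p,\lambda}_\omega}=\left(\frac{d+\alpha}{\omega(S_d)}\right)^{\lambda}\,(1+\lambda p)^{-1/p},
\]
which is the formula already used (for each $k$) in the proof of Theorem \ref{sec3theo3}; the statement of the lemma appears to carry a transcription typo. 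There is no real obstacle in the proof: the whole argument is a one-page computation with polar coordinates, and the homogeneity of $\omega$ is what makes the weighted Morrey norm of the extremizer $|x|^{(d+\alpha)\lambda}$ independent of the scale $R$.
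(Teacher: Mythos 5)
Your computation is correct and is precisely the routine polar-coordinate/homogeneity argument the authors had in mind (the paper omits the proof as ``straightforward''), and the hypotheses $\alpha>-d$ and $\lambda>-\frac1p$ enter exactly where you say they do. One point worth making explicit: your final constant $\left(\frac{d+\alpha}{\omega(S_d)}\right)^{\lambda}\left(1+\lambda p\right)^{-1/p}$ is the correct one, and it differs from the constant printed in the lemma, $\left(\omega(S_d)\right)^{-\lambda}\left((d+\alpha)(1+\lambda p)\right)^{-1/p}$, by a factor $(d+\alpha)^{\lambda+1/p}$; you are right to flag this as an error in the statement (which also carries stray subscripts $k$), since your formula matches the norm actually used in the proof of Theorem \ref{sec3theo3}, whereas the erroneous version is propagated into the displayed equation following the lemma in the proof of Theorem \ref{sec4theo1}.
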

Since the proof of this lemma is straightforward we omit it.   
Now we set $b_1(x)=b_2(x)=\log|x|$. Lemma \ref{sec4lem1} gives $b_1,b_2$ belong to $BMO_\omega\left(\mathbb R^d\right)$. Since $BMO_\omega\left(\mathbb R^d\right) \subset C\dot{M}O^q_\omega\left(\mathbb R^d\right)$ for any $1<q<\infty$, we obtain that $b_k\in C\dot{M}O^{q_k}_{\omega_k}\left(\mathbb R^d\right)$. Define $f_k(x)=|x|^{(d+\alpha_k)\lambda_k}$ if $x\in\mathbb R^d\setminus\{0\}$ and $f_k(0)=0$.	From lemma \ref{sec4lem2}, we get that 
\begin{equation}
\|f_k\|_{\dot{B}^{p_k,\lambda_k}_{\omega_k}}=\left(\omega_k(S_d)\right)^{-\lambda_k}\left(\frac1{(d+\alpha_k)(1+\lambda_k p_k)}\right)^{1/p_k}.
\end{equation}
Also we have
\[
U^{2,n,b}_{\psi,s}\left(f_1,f_2\right)(x)=\prod\limits_{k=1}^2|x|^{(d+\alpha_k)\lambda_k}\int_{[0,1]^n}\prod\limits_{k=1}^2\left(|s_k(t)|^{(d+\alpha_k)\lambda_k}\log\frac1{|s_k(t)|}\right)dt.
\]
Let $B=B(0,R)$ be any ball of $\mathbb R^d$, then
\begin{align*}
&\left(\frac1{\omega(B)^{1+\lambda p}}\int_B\left|U^{2,n,b}_{\psi,s}\left(f_1,f_2\right)(x)\right|^p\omega(x)dx\right)^{1/p}\\
\leq \quad&\left(\frac1{\omega(B)^{1+\lambda p}}\int_B\left|x\right|^{(d+\alpha)\lambda p}\omega(x)dx\right)^{1/p}\left(\int_{[0,1]^n}\prod\limits_{k=1}^2\left(|s_k(t)|^{(d+\alpha_k)\lambda_k}\log\frac1{|s_k(t)|}\right)\psi(t)dt\right) \\
=\quad&\left(\omega_k(S_d)\right)^{-\lambda_k}\left(\frac1{(d+\alpha)(1+\lambda p)}\right)^{1/p}\left(\int_{[0,1]^n}\prod\limits_{k=1}^2\left(|s_k(t)|^{(d+\alpha_k)\lambda_k}\log\frac1{|s_k(t)|}\right)\psi(t)dt\right) \\
=\quad&\prod_{k=1}^2\|f_k\|_{\dot{B}^{p_k,\lambda_k}_{\omega_k}}\left(\int_{[0,1]^n}\prod\limits_{k=1}^2\left(|s_k(t)|^{(d+\alpha_k)\lambda_k}\log\frac1{|s_k(t)|}\right)\psi(t)dt\right).
\end{align*}
Taking the supremum over $R>0$, we get that 
\[
\int_{[0,1]^n}\prod\limits_{k=1}^2\left(|s_k(t)|^{(d+\alpha_k)\lambda_k}\log\frac1{|s_k(t)|}\right)\psi(t)dt<\infty.
\]
 
\end{proof}
\vskip12pt
{\bf Acknowledgements.} The authors would like to thank Prof. Nguyen Minh Chuong for many helpful suggestions and discussions. This paper is granted by Vietnam NAFOSTED (National Foundation for Science and Technology Development).

\end{document}